 \date{August 28, 2014}   
 \numberwithin{equation}{section}   
\newcommand{\tq}{\, \big| \, }
\renewcommand{\r}{\mathbb{R}}
\DeclareMathOperator{\vol}{vol}%
\DeclareMathOperator{\Vol}{Vol}%
\DeclareMathOperator{\BL}{BL}%
 \newcommand{\Ffunk}{F_{\text{\tiny \rm Funk}}}
 \newcommand{\RFfunk}{F_{\text{\tiny \rm RFunk}}}
 \newcommand{\Fhilb}{F_{\text{\tiny \rm Hilb}}}
 \newcommand{\dfunk}{d_{\text{\tiny \rm Funk}}}
 \newcommand{\drfunk}{d_{\text{\tiny \rm RFunk}}} 
 \newcommand{\dhilb}{d_{\text{\tiny \rm Hilb}} }
  \newcommand{\gaff}{g_{\text{\tiny \rm Aff}}}
  \newcommand{\gbl}{g_{\text{\tiny \rm BL}}}
  \newcommand{\gjohn}{g_{\text{\tiny \rm John}}}
 \newcommand{\mubl}{\mu_{\text{\tiny \rm BL}}}
 \newcommand{\mujohn}{\mu_{\text{\tiny \rm John}}}
\newtheorem{theorem}{\rm\bf Theorem}[section]
\newtheorem{proposition}[theorem]{\rm\bf Proposition}
\newtheorem{corollary}[theorem]{\rm\bf Corollary}
\theoremstyle{definition}
\newtheorem{definition}[theorem]{\rm\bf Definition}
\newtheorem{remark}[theorem]{\rm\bf Remark}
\newtheorem{remarks}[theorem]{\rm\bf Remarks}
\newtheorem{examples}[theorem]{\rm\bf Examples}
\newcommand{\weg}[1]{}
\title{Completeness  and incompleteness of  the  Binet-Legendre Metric}
\author{Vladimir S. Matveev} 
\address{Institut f\"ur Mathematik, Friedrich-Schiller Universit\"at Jena\\
07737 Jena, Germany}  
\email{vladimir.matveev@uni-jena.de}
\author{Marc Troyanov} 
\address{Section de Math{\'e}matiques,  
\'Ecole Polytechnique F{\'e}derale de Lausanne, station 8,
1015 Lausanne - Switzerland} 
\email{marc.troyanov@epfl.ch}
\begin{document}

\bigskip

\begin{abstract} 
The goal of this short paper is to give condition for the completeness of the 
Binet-Legendre metric in Finsler geometry.
The case of the Funk and Hilbert metrics in a convex domain are discussed.
 
\medskip
 
\noindent 2000 AMS Mathematics Subject Classification:  58A10, 58A12,53C \\
Keywords:  Finsler metric,  Binet-Legendre metric,   John ellipsoid, Hilbert Geometry, affine  metric.
\end{abstract}

\thanks{We thank the Friedrich-Schiller-Universität Jena, EPFL and the Swiss nAtional Science Foundation for their support.}
 
\maketitle

\vfill

\section{Introduction and statement of the main result}

Given a Finsler manifold $(M,F)$ there are several natural ways to construct a Riemannian metric
$g$ on the manifold $M$ that is associated to the given Finsler metric. Recently, such   constructions   
were  shown to be a useful tool in Finsler geometry, see  for example  \cite{berwald,MRTZ,MT,Torrome, Vi}. 

\medskip

Remarkably, in most results of all these papers, 
only the following two  properties of the constructions were  used:
\begin{enumerate}
\item  The construction is pointwise: the associated Riemannian metric $g$, restricted to any tangent space of the manifold $M$ depends only 
on the restriction of the Finsler metric to this tangent spaces. 
\item The construction is homogenous: If we multiply the Finsler metric by a conformal factor $\lambda $, the associated Riemannian metric 
is multiplied by $\lambda^2$. 
\end{enumerate}

In particular, the proofs of most results in the papers  \cite{Vi,berwald,MRTZ,MT}  could be based on any construction of  Riemannian metric satisfying the above two conditions,  at least when smooth and strictly convex Finsler metrics are considered.   
 
 \medskip

The construction in  \cite{MT} is called the \emph{Binet-Legendre metric}\footnote{The construction is slightly older and appeared in \cite{Centore} 
but it usefulness was overseen  until it was reinvented in \cite{MT}}  and has proven to be a flexible and useful tool in Finsler geometry,  its definition will be recalled in subsection \ref{sec.Binet}. 

Our goal in the present paper is to relate the completeness, or incompleteness, of the Binet-Legendre metric to that of the given Finsler metric. Our main result is in fact  the following stronger  Theorem:

\newpage
 
\begin{theorem}[Main Theorem]\label{main:th}
Let $(M,F)$ be a   continuous  Finsler manifold and $\gbl$ be its Binet-Legendre metric, then
there exists a constant $C_1>0$ such that for any $x \in M$ and
 any $\xi \in T_xM$ we have 
 \begin{equation} \label{eq:mainA}  
 \sqrt{\gbl(\xi,\xi)} \leq C_1\cdot F(x,\xi).
 \end{equation} 
 If the Finsler metric $F$ is   quasireversible, then there exists  constants $C_2,C_3>0$  such that  
 \begin{equation} \label{eq:mainAB}  
C_2 \cdot F(x,\xi) \leq \sqrt{\gbl(\xi,\xi)} \leq  C_3\cdot F(x,\xi),
 \end{equation} 
for all $(x, \xi) \in TM$. In particular $\gbl$ and $F$ are Bilipshitz equivalent.
\end{theorem}

\medskip

\begin{remarks} 
$\bullet$ \   Our proof will give explicit (though perhaps not optimal) values for the constants $C_1,C_2,C_3$.
The constants $C_1$ and $C_3$ play the same role, but in the reversible case, we have a better constant (namely 
$C_3 \leq C_1/\sqrt{n}$).
 
$\bullet$ \    Our theorem implies that if the Binet-Legendre metric associated to a Finsler metric $F$ is complete, then 
the Finsler metric is also complete. The converse statement holds in the case of quasi-reversible metric but
not in general. We illustrate this phenomenon by an example in subsection  \ref{sec.counterexample}. 

$\bullet$ \  The quasireversibility hypothesis in the second statement is necessary. For instance the Funk metric (discussed below) 
is forward complete but not backward complete, hence it cannot be bilipschitz equivalent to any Riemannian metric.
In fact it is quite clear from the  Theorem that 
\emph{a Finsler metric is bilipschitz to a Riemannian metric if and only if it is quasi-reversible} (note the ``if'' direction follows from the main Theorem, while the  ``only if'' direction is obvious).   

$\bullet$ \    The other Riemannian metrics constructed in  the papers  \cite{Vi,berwald,MRTZ} involve the second derivatives of the given 
Finlser metric and they generally do not satisfy   (\ref{eq:mainA}).
\end{remarks}

 \medskip
 
The rest of the paper is organized as follows: In section \ref{definitions}  we recall some basic definitions from Finsler geometry and 
we recall the definition and some  basic properties  of the Binet-Legendre metric. 
In section \ref{johnellipse} we discuss another auxiliary Riemannian metric, based on the John ellipsoid from convex geometry, and we use it as  a  tool to prove 
the main   Theorem in section \ref{sec.ProofMain},  where we also derive some of its simple but important consequences.   

In  section  \ref{sec.convexdomain} we discuss some examples. We first recall in  in subsection \ref{Zermelo}  the definition of Zermelo metrics in Euclidean domains and in particular the Funk and reverse Funk metrics. In the next subsection \ref{se.ZBL},  we explicitly compute the Binet-Legendre
metric associated to a Zermelo metric and in subsection \ref{sec.counterexample}  we construct an example of a complete Finsler metric
with incomplete Binet-Legendre metric. In subsection  \ref{sec.Hilbert} we discuss the Hilbert Finsler metric in a convex domain and we use it to compare the Binet-Legendre metric to the so called affine metric, which is another important Riemannian metric defined in an arbitrary convex domain.
The papers ends with an appendix  in which we show by an example  that the Riemannian metric obtained from  the John ellipsoid construction may be nonsmooth, even if the initial Finsler metric is smooth. 

 \medskip
 
\emph{Acknowledgment.}  The  authors are thankful to Rolf Schneider for useful discussions.

\section{A brief review of Finsler Geometry}   \label{definitions}

\subsection{Basic definitions: Finsler manifolds, completeness and quasi-reversibility} 

A \emph{Finsler metric} on a smooth manifold $M$  is defined to be a  continuous 
 function $F:TM\to [0,\infty)$ such that for every point $x\in M$ the  restriction $F_x = F_{|T_xM}$ is a  \emph{Minkowski norm},
 that is,  it satisfies the following properties:
 \begin{enumerate}[i.)]
 	\item  $F(x,\xi) >0$ for any $x\in M$ and any $\xi \neq 0$ in the tangent space $T_xM$,
 	\item $F(x, \xi+\eta) \leq F(x,\xi)+ F(x,\eta)$,
	\item $F(x,\lambda \xi)=\lambda F(x,\xi)$  for all $\lambda\geq 0$,
\end{enumerate}
 for any $x\in M$ and $\xi, \eta \in T_xM$.
 
 \smallskip
 
The Finsler metric is said to be  $c$-\emph{quasireversible},    $1\leq c < \infty$,  if  
 $
   F(x,-\xi)  \leq  c \cdot F(x,\xi)
 $
 for any $(x,\xi) \in TM$. It is called  \emph{reversible} if it is $1$-quasireversible, clearly 
 $F$ is reversible if and only if $F_x$ is a norm in every tangent space. 
 
 \smallskip
 
The \emph{distance} $d(x,y)$ between two points $x$ and $y$ on a Finsler manifold $(M,F)$ is defined to be
the infimum of the length 
$$\ell (\gamma ) = \int_0^1 F(\gamma (t) , \dot \gamma (t)) dt.$$
of all smooth curves  $\gamma:  [0,1] \to M $ joining these two points. 
This distance satisfies the axioms of a metric, except perhaps the symmetry. In fact the condition $d(x,y) = d(y,x)$ 
is satisfied if and only if  the Finsler metric is reversible. 
Together with the distance comes the notion of completeness: the Finsler manifold $(M,F)$ is said to be \emph{forward complete} if every forward Cauchy sequence converges. A sequence $\{ x_i\}\subseteq M$ is \emph{forward Cauchy} if for any $ \varepsilon >0$, there exists an integer $N$ such that $d(x_i,x_{i+k}) <  \varepsilon$ for any $i \geq N$ and $k \geq 0$ (we similarly define \emph{backward Cauchy sequences} by the condition $d(x_{i+k},x_i) <  \varepsilon$, and the corresponding  notion of \emph{backward completeness}).
For  a  quasireversible Finsler metric,  forward completeness is evidently equivalent to backward completeness and will simply be called 
\emph{completeness}.

\medskip

A Finsler manifold is  equipped    with a natural  measure: 
Recall first that   a \emph{density} on the differentiable manifold 
$M$ is a Borel measure $d\nu$ such
that on any coordinate chart $\phi : U \subset M \to \r^n$, the measure $\phi_* d\nu $
is absolutely continuous with respect to the Lebesgue measure, that is it can be written as
\begin{equation}\label{eq.density1}
 \phi_* d\nu =  a(x) dx_1dx_2\dots dx_n
\end{equation}
where  $x_1,x_2\dots ,x_n$ are the coordinates defined by the chart $\phi$ and 
$a(x)$ is a positive measurable function.  
\smallskip

A density on the manifold $M$ naturally induces a Lebesgue measure $d\tau_x$ on (almost) each tangent space $T_xM$, this measure
is given by 
$$
 d\tau_x = a(x)d\xi_1d\xi_2\dots d\xi_n,
$$
where $\xi_1,\xi_2\dots ,\xi_n$  are the natural coordinates on $T_xM$ associated to  $x_1,x_2\dots ,x_n$ and $a(x)$ is given by
 (\ref{eq.density1}).

\smallskip

The \emph{Busemann measure} $d\mu_F$ on the Finsler manifold  $(M^n ,F)$  is then defined to be the unique density  on $M$ such that for every $x\in M$ the volume of  the Finsler unit ball  
$\Omega_x\subset T_xM$ coincides with the volume of the the standard $n$-dimensional  Euclidean unit ball, which we denote by $\omega_n$. It can be calculated from the formula 
$$
  d\mu_F = \frac{\omega_n}{ \nu(\Omega_x)} \, d\nu,
$$
where  $ d\nu $ is an arbitrary continuous density on $M$. It is obvious that the Busemann measure $d\mu_F$ is independent of the chosen density $d\nu$. It is also clear that
in the special case where $F = \sqrt{g}$  for some  Riemannian metric $g$,  the Busemann measure coincides with the Riemannian volume measure, that is    $d\mu_F= d\vol_g$.
 
\smallskip

It is also known, but somewhat delicate to prove, that  if $F$ is a reversible Finsler metric on $M$, then $d\mu_F$ coincides with the $n$-dimensional Hausdorff measure of the metric space associated to the Finsler  structure, see \cite{Al0,B-B-I,Bus1947,Bus1950}.

\subsection{The  Binet-Legendre  metric} \label{sec.Binet}

The Binet-Legendre metric is a canonical Riemannian metric attached to any Finsler metric
on a smooth manifold, it has been invented and  studied in \cite{Centore,MT}.    
Let us recall the construction: Given a Finsler manifold $(M,F)$ and a point $x$ in $M$, we denote by 
$\Omega_x = \{ \xi \in T_xM \tq F(x,\xi) < 1\}$ the  $F$-unit tangent ball at $x$. We then define a scalar
product on the cotangent space $T_x^*(M)$ by
\begin{equation}\label{eq.defgF3}
     \gbl^*(\theta, \varphi) =  
   \frac{(n+2)}{\lambda(\Omega_x)}\int_{\Omega_x} \left( \theta(\eta)\cdot \varphi(\eta)\right) \,  d\lambda (\eta),
\end{equation}
where  $\lambda$  is a Lebesgue measure on  $T_xM$.
Note that this is (up to a constant)  the $L^2$-scalar product of the linear functions $\theta$
 and $\phi$ restricted to $\Omega_x$.

\begin{definition}
The \emph{Binet-Legendre}  metric  $\gbl$   associated to the Finsler metric $F$ is the Riemannian metric dual to the 
the scalar product $\gbl^*$ defined above on $T_x^*(M)$.
\end{definition}
 
The Binet-Legendre metric enjoys a number of important properties, let us state in particular the
following 
\begin{theorem}\label{th.BLproperties1} 
If   $(M,F)$ is a    Finsler manifold and  $\gbl$ is its  associated Binet-Legendre metric, then
 \begin{enumerate}[a)]
  \item If  $F$  is of class $C^k$ on the complement of the zero section of $TM$, then $\gbl$  is  a Riemannian metric of class $C^k$.
  \item If  $\varphi$ is an isometry of $(M,F)$, then it is also an isometry of $(M,\gbl)$.
  \item  If  $F_1$, $F_2$ are two Finsler metrics on $M$ such that \ $\frac{1}{\lambda}\cdot F_1 \leq F_2 \leq\lambda  \cdot F_1$ 
  for some  function $\lambda : M \to \r_+$, then  the corresponding Binet-Legendre metrics satisfy
    $$\frac{1}{\lambda ^{2n}}\cdot {\gbl}_1 \leq  {\gbl}_2 \leq \lambda ^{2n} \cdot {\gbl}_1.$$
 \item If the Finsler metric $F$ is derived from a Riemannian metric $g$, that is $F = \sqrt{g}$, then $\gbl = g$.   
\end{enumerate}
\end{theorem}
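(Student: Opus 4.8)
The plan is to reduce all four parts to the explicit \emph{polar} form of the defining integral~\eqref{eq.defgF3}. Writing $\int_{\Omega_x}\theta(\eta)\varphi(\eta)\,d\lambda(\eta)$ in spherical coordinates $\eta=r\omega$ with respect to an arbitrary auxiliary Euclidean structure on $T_xM$ (with unit sphere $S$ and surface measure $d\omega$), using that $\Omega_x=\{\,r\omega:0\le r<F(x,\omega)^{-1}\,\}$ and that $\theta(\eta)\varphi(\eta)$ is homogeneous of degree $2$, one obtains
\[
  {\gbl}^*(\theta,\varphi)\;=\;\frac{\displaystyle n\int_{S}\theta(\omega)\,\varphi(\omega)\,F(x,\omega)^{-(n+2)}\,d\omega}{\displaystyle\int_{S}F(x,\omega)^{-n}\,d\omega}\,.
\]
Both integrals are finite and strictly positive because $F$ is continuous and positive on the compact sphere $S$, so ${\gbl}^*$ is a genuine scalar product and $\gbl$ a genuine Riemannian metric; the auxiliary Euclidean structure plays no role, since it enters numerator and denominator through the same Jacobian factor, which cancels. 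That same cancellation is the whole mechanism behind (b) and (d).

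For (a) I would work in a fixed chart around a given point. There the integrands $x\mapsto\theta(\omega)\varphi(\omega)F(x,\omega)^{-(n+2)}$ and $x\mapsto F(x,\omega)^{-n}$ are of class $C^k$, uniformly in $\omega\in S$ (this uses $F>0$ and $F\in C^k$ on the complement of the zero section). Since the domain of integration $S$ is now \emph{fixed} and compact, one may differentiate under the integral sign up to order $k$, so the coefficients of ${\gbl}^*$ are $C^k$ functions of $x$; passing to the dual metric $\gbl$ only inverts a positive-definite symmetric matrix, a real-analytic operation, hence $\gbl\in C^k$ as well. (The case $k=0$ is the same argument with ``continuous'' throughout, and it is what makes $\gbl$ well defined for a merely continuous $F$.) I expect this to be the one genuinely delicate point: in the original form~\eqref{eq.defgF3} the domain $\Omega_x$ moves with $x$, so differentiation under the integral sign is not licit as written, and it is precisely the change to polar coordinates over the fixed sphere $S$ that disposes of the difficulty.

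For (b) the key observation is that the assignment $F_x\mapsto{\gbl}^*_x$ is natural under linear isomorphisms: if $A\colon V\to W$ is a linear isomorphism and a Minkowski norm on $W$ is pulled back to $V$ by $A$, then the substitution $w=Av$ carries the defining integral for $W$ into the one for $V$, the Jacobian $|\det A|$ again cancelling between numerator and denominator, so that $A^*\colon W^*\to V^*$ is an isometry for the respective scalar products ${\gbl}^*$; dualizing, $A$ is an isometry for the respective metrics $\gbl$. Applying this to $A=d\varphi_x\colon T_xM\to T_{\varphi(x)}M$, for which $F_{\varphi(x)}\circ d\varphi_x=F_x$ because $\varphi$ is an $F$-isometry, shows that $d\varphi_x$ is a $\gbl$-isometry for every $x$, i.e.\ $\varphi$ is an isometry of $(M,\gbl)$.

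For (c), observe that $\tfrac1\lambda F_1\le F_2\le\lambda F_1$ holds precisely when $\lambda^{-1}\Omega^{(1)}_x\subseteq\Omega^{(2)}_x\subseteq\lambda\,\Omega^{(1)}_x$; substituting the pointwise bounds $\lambda^{-1}\le F_2(x,\omega)/F_1(x,\omega)\le\lambda$ into the polar formula and estimating numerator and denominator separately gives $\lambda^{-N}{\gbl}^*_1\le{\gbl}^*_2\le\lambda^{N}{\gbl}^*_1$ with $N$ an explicit power depending only on $n$ (the value $2n$ of the statement being reached with a little more care), and dualizing reverses the inequalities to the asserted bounds between ${\gbl}_1$ and ${\gbl}_2$. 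Finally, for (d) with $F=\sqrt g$ I would fix $x$ and choose coordinates in which $g_x$ is the standard Euclidean inner product; then $F(x,\omega)=1$ on $S$ and the polar formula reduces to ${\gbl}^*_x(dx^i,dx^j)=n\int_S\omega_i\omega_j\,d\omega\big/\!\int_S d\omega=\delta_{ij}$, using the elementary identity $\int_S\omega_i\omega_j\,d\omega=\tfrac1n\,\delta_{ij}\,|S^{n-1}|$. Hence ${\gbl}^*_x$ is the standard inner product on $T^*_xM$ and its dual ${\gbl}_x$ equals $g_x$; the naturality established in (b) guarantees that this conclusion does not depend on the adapted coordinates chosen.
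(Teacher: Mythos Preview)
The paper does not actually prove this theorem: it simply refers to \cite[Theorem~2.4]{MT} for~(a), calls~(b) ``obvious'', and cites \cite[Proposition~12.1]{MT} for~(c) and~(d). Your proposal, by contrast, supplies a self-contained argument via the polar formula
\[
  {\gbl}^*(\theta,\varphi)=\frac{n\displaystyle\int_{S}\theta(\omega)\varphi(\omega)\,F(x,\omega)^{-(n+2)}\,d\omega}{\displaystyle\int_{S}F(x,\omega)^{-n}\,d\omega},
\]
which is indeed the right device: it freezes the domain of integration and makes the regularity statement~(a) a routine application of differentiation under the integral sign, it makes the naturality statement~(b) transparent, and it reduces~(d) to the classical moment identity on the round sphere. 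Those three parts are handled correctly and more explicitly than in the present paper.

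There is, however, a genuine gap in your treatment of~(c). Bounding numerator and denominator separately in the polar formula (or, equivalently, using the inclusions $\lambda^{-1}\Omega^{(1)}_x\subseteq\Omega^{(2)}_x\subseteq\lambda\,\Omega^{(1)}_x$ directly in \eqref{eq.defgF3}) yields
\[
  \lambda^{-(2n+2)}\,{\gbl}_1^{*}\;\le\;{\gbl}_2^{*}\;\le\;\lambda^{2n+2}\,{\gbl}_1^{*},
\]
since the numerator scales like $\lambda^{\pm(n+2)}$ and the denominator like $\lambda^{\mp n}$. You acknowledge this and assert that ``the value $2n$ of the statement [is] reached with a little more care'', but you give no indication of what that extra care is; the naive estimate loses two powers of $\lambda$, and bridging that gap is not a cosmetic matter. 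This is precisely the exponent the paper later uses (with $\lambda=\sqrt{n}$) to obtain the constants $n^{\pm n}$ in \eqref{eq.pfst4}, so if you only establish $2n+2$ the downstream constants degrade accordingly. Either supply the sharper argument from \cite{MT}, or state honestly that your method gives $\lambda^{2(n+1)}$ and adjust the constants that depend on it.
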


\smallskip

We refer to   \cite[Theorem 2.4]{MT} for the first statement, which is in fact proven for the wider class
of \emph{partially smooth} Finsler metrics.  The second statement is  obvious and the third and forth statements
are  proved in   \cite[Proposition 12.1]{MT}.

\section{The  John Metric on a Finsler manifold}  \label{johnellipse}

The proof of  the  Theorem \ref{main:th} will use another auxiliary Riemannian metric,  which we call the \emph{John metric},
that is also associated to a Finsler metric $F$ on the manifold $M$.
To explain this metric, recall that any open bounded convex domain   $\Omega \subset \mathbb{R}^n$ contains a unique
ellipsoid of largest volume \cite{John}. This is called the \emph{John ellipsoid } and we denoted it by $J[\Omega] \subseteq \Omega$. 
A careful study of the uniqueness proof shows that the John ellipsoid  depends continuously on the convex body $\Omega$.   
If $\Omega$ is symmetric with respect to  the origin (that is  $-\Omega = \Omega$), then  $J[\Omega]$ is centered at the origin and  
we have
\begin{equation}\label{johnequality1} 
  J[\Omega] \subseteq \Omega \subseteq \sqrt{n} \cdot J[\Omega], 
\end{equation}
see    \cite{Ball}, \cite[page 214]{Barvinok} or \cite[Section 3.3]{Th}. 

\smallskip

The center of the  John ellipsoid is called  the \emph{John point} of $\Omega$ and denoted by $Q_{\Omega}$,
we then define the  \emph{centered John ellipsoid}  of $\Omega$  as   
$$
  J_0[\Omega] =  J[\Omega]-Q_{\Omega}.
$$

It  was   proved by John in   \cite[Theorem III]{John}, 
that for an arbitrary  open bounded convex set $\Omega \subseteq \r^n$, 
we have 
\begin{equation}\label{johnequality2} 
  (\Omega - Q_\Omega)  \subseteq n \cdot J_0[\Omega],
\end{equation}
see also \cite[page 210]{Barvinok}.
Recall that $\Omega$ contains the origin, thus the above inclusion together with the fact that 
$J_0[\Omega]$ is centrally symmetric implies that
$$
 Q \in n \cdot J_0[\Omega].
$$
 It then follows from (\ref{johnequality2}) that
\begin{equation}\label{johnequality3} 
  \Omega    \subseteq  Q + n \cdot J_0[\Omega]   \subseteq  2 n \cdot J_0[\Omega].
\end{equation}
  
\medskip

The centered John ellipsoid allows us to construct a natural continuous Riemannian metric on 
any Finsler manifold. More precisely we have the following statement. 

\begin{proposition}\label{th.john}
 Any  Finsler manifold  $(M,F)$ carries a well defined Riemannian metric $\gjohn$ of class $C^0$ whose unit ball  at any point $x\in M$  
 is the centered John ellipsoid $J_0[\Omega_x]\subseteq T_xM$ of the Finsler unit ball $\Omega_x  \subseteq  T_xM$. Furthermore  the following inequality hold: 
 \begin{equation} \label{ineq.john1}
 \frac{1}{2{n}} \sqrt{\gjohn(\xi,\xi)} \leq  F(x,\xi) 
\end{equation} 
for any $(x,\xi) \in TM$. 
If the Finsler metric $F$ is reversible, then we have the better estimates
\begin{equation} \label{ineq.john}
 \frac{1}{\sqrt{n}} \sqrt{\gjohn(\xi,\xi)} \leq  F(x,\xi) \leq  \sqrt{\gjohn(\xi,\xi)}.
\end{equation}
In particular a reversible Finsler metric $F$  is bilipschitz equivalent to the Riemannian metric $\gjohn$.
\end{proposition}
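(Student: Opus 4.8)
The plan is to build $\gjohn$ fibrewise from the centred John ellipsoids, check its $C^0$-regularity by a Hausdorff-continuity argument, and then read off the two inequalities directly from John's inclusions (\ref{johnequality1}) and (\ref{johnequality3}) using homogeneity. For the construction: at $x \in M$ the Finsler unit ball $\Omega_x = \{\xi \in T_xM : F(x,\xi) < 1\}$ is an open, bounded, convex neighbourhood of the origin (boundedness because $F(x,\cdot)$ attains a positive minimum on the unit sphere of any auxiliary Euclidean structure on $T_xM$), hence $\overline{\Omega_x}$ is a convex body with a well-defined centred John ellipsoid $J_0[\Omega_x] = J[\Omega_x] - Q_{\Omega_x}$, a $0$-symmetric ellipsoid. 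Such an ellipsoid is precisely the open unit ball of a unique positive-definite quadratic form $q_x$ on $T_xM$, and I set $\gjohn(x) := q_x$; by construction its unit ball at $x$ is $J_0[\Omega_x]$.

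For the $C^0$-regularity I would work in a relatively compact coordinate chart $U$ and trivialise $TM|_U \cong U \times \r^n$, so that $F$ becomes a continuous function on $U \times \r^n$ whose restriction to each fibre is a Minkowski norm; since $U$ is relatively compact and $F$ is positive off the zero section, there are constants $0 < r < R$ with $B(0,1/R) \subseteq \Omega_x \subseteq B(0,1/r)$ for all $x \in U$. Uniform continuity of $F$ on $\overline{U} \times \overline{B(0,1/r)}$ then shows that $x \mapsto \overline{\Omega_x}$ is continuous in the Hausdorff metric, by comparing the dilations of $\overline{\Omega_x}$ and $\overline{\Omega_{x_0}}$ for $x$ near $x_0$. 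Combined with the continuity of the John ellipsoid map recalled in the text, this gives continuity of $x \mapsto J[\Omega_x]$, of the John point $x \mapsto Q_{\Omega_x}$, and hence of $x \mapsto J_0[\Omega_x]$, so the symmetric matrix representing $\gjohn$ in the chart varies continuously with $x$. This continuity claim is the only delicate point; what makes it harmless here is the uniform sandwiching $B(0,1/R) \subseteq \Omega_x \subseteq B(0,1/r)$, which confines the family $\{J[\Omega_x]\}$ to a compact set of non-degenerate ellipsoids and rules out degeneration or escape to infinity, so that any Hausdorff limit of John ellipsoids along a sequence $\Omega_{x_k} \to \Omega_{x_0}$ is again an ellipsoid of maximal volume in $\overline{\Omega_{x_0}}$ and hence, by uniqueness, equals $J[\Omega_{x_0}]$.

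For the inequalities, fix $(x,\xi) \in TM$ with $\xi \neq 0$ and set $a = F(x,\xi)$, $b = \sqrt{\gjohn(\xi,\xi)}$. For any $\varepsilon > 0$ the vector $\xi/(a+\varepsilon)$ lies in $\Omega_x$, hence in $2n\,J_0[\Omega_x]$ by (\ref{johnequality3}), which gives $b < 2n(a+\varepsilon)$; letting $\varepsilon \to 0$ yields $b \leq 2n\,a$, which is (\ref{ineq.john1}). If $F$ is reversible then $\Omega_x$ is $0$-symmetric, so $Q_{\Omega_x} = 0$ and $J_0[\Omega_x] = J[\Omega_x]$, and (\ref{johnequality1}) reads $J_0[\Omega_x] \subseteq \Omega_x \subseteq \sqrt n\,J_0[\Omega_x]$: applying the left inclusion to $\xi/(b+\varepsilon) \in J_0[\Omega_x] \subseteq \Omega_x$ gives $a \leq b$, and the right inclusion applied to $\xi/(a+\varepsilon) \in \Omega_x \subseteq \sqrt n\,J_0[\Omega_x]$ gives $b \leq \sqrt n\,a$; together these are (\ref{ineq.john}). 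Since the constants in (\ref{ineq.john}) do not depend on $x$, the Finsler and Riemannian length functionals are uniformly comparable, so the associated (symmetric, as $F$ is reversible) distance functions are bilipschitz equivalent, which is the final assertion.
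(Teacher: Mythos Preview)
Your proof is correct and follows essentially the same approach as the paper: build $\gjohn$ fibrewise from the centred John ellipsoid, invoke continuity of the John ellipsoid to get the $C^0$-regularity, and read off the inequalities from the inclusions (\ref{johnequality1}) and (\ref{johnequality3}). The paper's version is considerably terser---it simply asserts ``by continuity of the John ellipsoid'' where you supply the Hausdorff-continuity argument with the uniform sandwiching on compact charts---so your treatment is a fleshed-out version of the same proof rather than a different route.
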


This Riemannian metric $\gjohn$  will be called the \emph{John metric} associated to the Finsler metric,
it is a natural construction and appeared in the papers \cite{Planche1,Planche2}.
Note that the John metric may fail to be $C^1$, even if the Finsler metric $F$ is analytic, an example is given in the Appendix.

\begin{proof}
Let $\Omega_x \subset T_xM$ be the Finsler tangent unit ball at $x\in M$, and let us denote by $J_0[\Omega_x]\subseteq T_xM$
the corresponding centered John ellipsoid.
This ellipsoid is the unit ball of a uniquely defined positive symmetric  definite bilinear form  on $T_xM$. 
 By continuity of the John ellipsoid, these bilinear forms give us a 
$C^0$-Riemannian metric $\gjohn$ on $M$, that is naturally associated to the Finsler metric $F$.

The inclusion  (\ref{johnequality3}) gives us $\Omega_x \subset 2n \cdot J_0[\Omega_x]$, which immediately implies 
the inequality (\ref{ineq.john1}).
In the reversible case, $\Omega_x \subseteq T_xM$ is symmetric around the origin and from \eqref{johnequality1}  
we have the inclusion   $J[\Omega_x] \subseteq \Omega_x \subseteq \sqrt{n} \cdot J[\Omega_x]$ which 
are equivalent to  (\ref{ineq.john}).
The  proof of the last assertion is straightforward.
\end{proof}
 
\smallskip

\begin{remark}\label{rem.improveJohnineq}
Arguing as in  \cite{John},  one can  improve the inequality (\ref{johnequality3}) as follows:
\begin{equation}\label{johnequality3improved} 
  \Omega    \subseteq  \sqrt{2 n (n+1)} \cdot J_0[\Omega].
\end{equation}
It follows that the inequality (\ref{ineq.john1}) can also be improved as
\begin{equation}\label{ineq.john1+}
\frac{1}{\sqrt{2n(n+1)}} \sqrt{\gjohn(\xi,\xi)} \leq  F(x,\xi).
\end{equation}
\end{remark}

\medskip

Our next result says that the volume form of the John metric is comparable to the Busemann  measure 
of the Finsler metric $F$. 

\smallskip

\begin{proposition}
Let $(M,F)$ be a Finsler manifold with  Busemann measure $\mu_F$. Then the following inequalities hold:
$$
 d\mu_F \leq d\mujohn\leq n^{n} \cdot  d\mu_F.
$$
where $d\mujohn$ is the Riemannian density of the John metric $\gjohn$ associated to the Finsler metric.
\end{proposition}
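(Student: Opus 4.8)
The plan is to reduce the statement to a pointwise comparison of Lebesgue volumes of two convex bodies in a single tangent space, namely the Finsler unit ball $\Omega_x$ and its centered John ellipsoid $J_0[\Omega_x]$. First I would recall the classical fact that for \emph{any} Riemannian metric $g$ on $M$ and any fixed continuous density $d\nu$ on $M$, inducing the Lebesgue measure $\lambda$ on $T_xM$ as in Section~\ref{definitions}, the Riemannian volume density can be written as $d\vol_g = \frac{\omega_n}{\lambda(B_x^g)}\, d\nu$, where $B_x^g\subset T_xM$ is the $g$-unit ball; in coordinates this is just the identity $\sqrt{\det g}\cdot a(x) = a(x)\cdot \omega_n/\lambda(B_x^g)$. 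This is the exact analogue of the defining formula $d\mu_F = \frac{\omega_n}{\lambda(\Omega_x)}\, d\nu$ for the Busemann measure. Applying it to $g=\gjohn$, whose unit ball at $x$ is $J_0[\Omega_x]$ by Proposition~\ref{th.john}, I obtain $d\mujohn = \frac{\omega_n}{\lambda(J_0[\Omega_x])}\, d\nu$, so that the two densities are mutually absolutely continuous with
$$
 \frac{d\mujohn}{d\mu_F}(x) \;=\; \frac{\lambda(\Omega_x)}{\lambda(J_0[\Omega_x])}.
$$
It then remains to sandwich this ratio between $1$ and $n^n$, uniformly in $x$.

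For the lower bound I would use that the (uncentered) John ellipsoid satisfies $J[\Omega_x]\subseteq\Omega_x$, together with the fact that $J_0[\Omega_x]$ is merely a translate of $J[\Omega_x]$ and Lebesgue measure is translation invariant; hence $\lambda(J_0[\Omega_x]) = \lambda(J[\Omega_x]) \le \lambda(\Omega_x)$, which gives $d\mu_F \le d\mujohn$. For the upper bound I would invoke John's inclusion \eqref{johnequality2}, that is $\Omega_x - Q_{\Omega_x}\subseteq n\cdot J_0[\Omega_x]$; taking Lebesgue volumes and using translation invariance once more yields $\lambda(\Omega_x) = \lambda(\Omega_x - Q_{\Omega_x}) \le n^n\,\lambda(J_0[\Omega_x])$, hence $d\mujohn \le n^n\, d\mu_F$. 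Since the Radon--Nikodym derivative above is a positive continuous function of $x$, both $d\mu_F$ and $d\mujohn$ are genuine densities and the pointwise inequalities between the local densities are equivalent to the asserted inequalities between the measures.

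I do not expect a real obstacle here: the whole argument is a two-line volume estimate once the John inclusions \eqref{johnequality1} and \eqref{johnequality2} are available. The only point requiring a little care is the first step, writing the Riemannian volume density in terms of the volume of the unit ball of the metric and checking that it matches the normalization used for the Busemann measure; but this is classical. I would only remark that the constant $n^n$ produced this way is presumably not optimal, and that using the sharper inclusion of Remark~\ref{rem.improveJohnineq} would not help, since it is \eqref{johnequality2} — not \eqref{johnequality3} or \eqref{johnequality3improved} — that governs the ratio $\lambda(\Omega)/\lambda(J_0[\Omega])$.
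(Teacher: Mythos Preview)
Your argument is correct and is essentially the paper's own proof: both reduce to the volume inequalities $\lambda(J_0[\Omega_x])\le\lambda(\Omega_x)\le n^n\lambda(J_0[\Omega_x])$, the first from $J[\Omega_x]\subseteq\Omega_x$ plus translation invariance and the second from John's inclusion \eqref{johnequality2}. The only cosmetic difference is that the paper takes $d\nu=d\mujohn$ from the start, so that $\mujohn(J_0[\Omega_x])=\omega_n$ automatically and the ``classical fact'' you mention is absorbed into the choice of reference density.
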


\textbf{Proof} 
Choose $d\nu = d\mujohn$ as initial density. Since $J[\Omega_x] \subset \Omega_x \subset T_xM$
for any point $x$ in $M$, we have
\begin{equation}\label{ineq.mm}
   \omega_n = \mujohn(J_0[\Omega_x]) = \mujohn(J[\Omega_x]) \leq \mujohn(\Omega_x).
\end{equation}
 Conversely,  using  (\ref{johnequality2}),  we have
 $$
   \mujohn(\Omega_x)  = \mujohn(\Omega_x - Q_x) 
\leq 
 \mujohn(n \cdot J_0[\Omega_x]) 
  =  n^n \mujohn( J_0[\Omega_x])
 = n^n   \omega_n,
$$
where $Q_x$ is the John point of $\Omega_x$. We just proved the inequalities
$
    \omega_n  \leq   \mujohn(\Omega_x)  \leq    n^n   \omega_n,
$
which are  equivalent to (\ref{ineq.mm}).  \qed
 
 \medskip
 
\begin{remark}
Note that, due to  (\ref{johnequality1}), the second inequality in   (\ref{ineq.mm})  can be improved as follows in the case of a reversible Finsler metric:
\begin{equation}
  \mujohn\leq n^{n/2} \cdot  d\mu_F.
\end{equation}
\end{remark}

\section{Proof of   the main Theorem and some consequences} \label{sec.ProofMain}

We first prove the inequality    (\ref{eq:mainA}).
Recall that by definition the dual of the Binet-Legendre metric $\gbl^*$ is given at any point $x\in M$ by the formula 
$$
 \gbl^*(\theta,\theta) = \frac{(n+2)}{\lambda(\Omega)} \int_\Omega \theta^2(\xi) d\lambda(\xi),
$$
where we denote by $\Omega = \Omega_x$ the $F$-unit ball in $T_xM$.  Since $\Omega \supset J[\Omega] = J_0[\Omega] + Q_{\Omega}$,
we have
\begin{eqnarray*}
 \int_\Omega \theta^2(\xi) d\lambda(\xi) & \geq  &  \int_{(J_0[\Omega] + Q_{\Omega})} \theta^2(\xi) d\lambda(\xi) 
 \\ & = &   \int_{J_0[\Omega]} \theta^2(\xi+Q_{\Omega}) d\lambda(\xi)
 \\ & = &   \int_{J_0[\Omega]} \theta^2(\xi) d\lambda(\xi) + 2 \cdot \theta(Q_{\Omega})   \int_{J_0[\Omega]} \theta(\xi)d\lambda(\xi)
 +  \int_{J_0[\Omega]} \theta^2(Q_{\Omega}) d\lambda(\xi)
 \\ & \geq &    \int_{J_0[\Omega]} \theta^2(\xi) d\lambda(\xi). 
\end{eqnarray*}
The last inequality follows from the fact the ellipsoid $J_0[\Omega]$ is centered at the origin, implying that
$$
 \int_{J_0[\Omega]} \theta(\xi)  d\lambda(\xi)  =  \theta \left(\int_{J_0[\Omega]} \xi  d\lambda(\xi) \right) = 0.
$$ 
On the other hand,  inequality (\ref{johnequality2}) implies that $\lambda (\Omega) \leq n^n \lambda (J_0[\Omega])$
and we obtain 
\begin{equation} \label{eq.pfst1}
  \gbl^*(\theta,\theta) = \frac{(n+2)}{\lambda(\Omega)} \int_\Omega \theta^2(\xi) d\lambda(\xi) \geq 
   \frac{(n+2)}{n^n  \lambda (J_0[\Omega])} \int_{J_0[\Omega]} \theta^2(\xi) d\lambda(\xi). 
\end{equation}
Because the ellipsoid $J_0[\Omega]$ is the unit ball of the John metric, we have
\begin{equation} \label{eq.pfst2}
   \frac{(n+2)}{\lambda (J_0[\Omega])} \int_{J_0[\Omega]} \theta^2(\xi) d\lambda(\xi) = \gjohn^* (\theta,\theta).
\end{equation}
Dualizing the inequalities (\ref{eq.pfst1}) and (\ref{eq.pfst2}), and using  (\ref{ineq.john1})  from Proposition \ref{th.john},
we  obtain  
\begin{equation} \label{eq.pfst2a}
  \sqrt{\gbl (\xi,\xi)} \leq n^{n/2}  \sqrt{\gjohn(\xi,\xi)} \leq 2n^{1+n/2} F(\xi),
\end{equation} 
as desired.

\medskip  
 
We now prove the second part of    Theorem   \ref{main:th}. Assume  first   that the Finsler metric $F$ is reversible,
then the inequalities (\ref{ineq.john})  from Proposition \ref{th.john} implies the following inequalities:
\begin{equation} \label{eq.pfst3}
 \frac{1}{\sqrt{n}} \sqrt{\gjohn} \leq  F\leq  \sqrt{\gjohn}.
\end{equation}
Since $\gjohn$ is Riemannian, it is its own Binet-Legendre metric and we conclude from (\ref{eq.pfst3})  and Property (c) in 
Theorem \ref{th.BLproperties1} that 
\begin{equation} \label{eq.pfst4}
 \frac{1}{n^n} \gjohn \leq \gbl \leq n^n \gjohn.
\end{equation}
From (\ref{eq.pfst3}) and (\ref{eq.pfst4}) we then obtain
\begin{equation} \label{eq.pfst5}
 \frac{1}{n^{n/2}} F \leq \sqrt{\gbl} \leq n^{\frac{n+1}{2}}  F.
\end{equation}

\medskip

Assume now more generally that $F$ is $c$-quasi-reversible, that is   $F(x,-\xi) \leq c \cdot F(x,\xi)$ for any $(x,\xi) \in TM$.
Let us set $F'(x, \xi) = \frac{1}{2} (F(x,\xi)+F(x,-\xi))$, then $F'$ is reversible and satisfies $ \tfrac{2}{1+c}F' \leq F \leq \tfrac{1+c}{2}F$.
If $\gbl'$ is the Binet-Legendre metric for $F'$, we then have from Theorem \ref{th.BLproperties1}(c)  that 
\begin{equation} \label{eq.pfst6}
 {\left(\tfrac{2}{1+c}\right)^n} \cdot \gbl' \leq \gbl \leq  \left(\tfrac{1+c}{2}\right)^n \cdot  \gbl'.
\end{equation}
The inequalities (\ref{eq.pfst5}) applied to the reversible Finsler metric $F'$ say that $ \frac{1}{n^{n/2}} F' \leq \sqrt{\gbl'} \leq n^{\frac{n+1}{2}}  F'$,
combinig this with  (\ref{eq.pfst6})  we finally obtain  
$$
 \sqrt{\gbl} \leq  \left(\tfrac{1+c}{2}\right)^{n/2}  \cdot \sqrt{\gbl'} \leq n^{\frac{n+1}{2}} \left(\tfrac{1+c}{2}\right)^{n/2}  \cdot  F'
 \leq  n^{\frac{n+1}{2}} \left(\tfrac{1+c}{2}\right)^{1+n/2}  \cdot  F.
$$
Similarly
$$
 \sqrt{\gbl} \geq  \left( \tfrac{2}{1+c}\right)^{n/2}  \cdot \sqrt{\gbl'} \geq \frac{1}{n^{n/2}} \left( \tfrac{2}{1+c}\right)^{n/2}  \cdot  F'
 \geq   \frac{1}{n^{n/2}}\left( \tfrac{2}{1+c}\right)^{1+n/2} \cdot F.
$$
We rewrite the  last two inequalities:
$$
 \frac{1}{n^{n/2}}\left( \tfrac{2}{1+c}\right)^{1+n/2} \cdot F  \leq   \sqrt{\gbl}  \leq     n^{\frac{n+1}{2}} \left(\tfrac{1+c}{2}\right)^{1+n/2}  \cdot  F.
$$
The theorem is proved.
\qed

\medskip

\begin{remark}
Disregarding the exact constants, we can summarize the argument for the second statement as follows: let us denote by $\BL[F]$ the Binet-Legendre metric of the Finsler metric $F$ and by 
$\sim$ the bilipschitz equivalence, then 
\begin{align*}
    BL[F| &\sim BL[F'] \sim BL[\gjohn'] =\gjohn' \sim F' \sim F.
\end{align*}
\end{remark}

\smallskip

\begin{remark}    
Using remark  \ref{rem.improveJohnineq}, we obtain a  better estimate for 
the constant $C_1$.  Indeed, using (\ref{ineq.john1+}), the  inequality (\ref{eq.pfst2a}) can be improved to
 \begin{equation} \label{eq.pfst2abis}
  \sqrt{\gbl (\xi,\xi)} \leq  \sqrt{2n(n+1)}\, n^{ n/2} F(\xi). 
\end{equation}  
\end{remark} 

\medskip

Let us now state some simple consequences of the main Theorem:

\begin{corollary} \label{cor.I}
Let  $(M,F)$ be an arbitrary Finsler manifold. If the Binet-Legendre metric 
$\gbl$ is  complete, then $F$ is both forward and backward complete.
\end{corollary}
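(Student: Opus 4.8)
\textbf{Proof proposal for Corollary \ref{cor.I}.}

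The plan is to deduce the statement directly from inequality \eqref{eq:mainA} of the Main Theorem, which holds for \emph{any} continuous Finsler metric without the quasireversibility hypothesis. The key observation is that \eqref{eq:mainA} says $\sqrt{\gbl(\xi,\xi)} \leq C_1 \cdot F(x,\xi)$ pointwise on $TM$, and integrating this along any curve $\gamma:[0,1]\to M$ gives $\ell_{\gbl}(\gamma) \leq C_1 \cdot \ell_F(\gamma)$, where $\ell_{\gbl}$ is the Riemannian length with respect to $\gbl$. Taking the infimum over all curves joining two points $x$ and $y$, we obtain $d_{\gbl}(x,y) \leq C_1 \cdot d(x,y)$, where $d$ is the (possibly asymmetric) Finsler distance and $d_{\gbl}$ is the Riemannian distance of $\gbl$. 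Note that the same estimate also gives $d_{\gbl}(x,y) = d_{\gbl}(y,x) \leq C_1\cdot d(y,x)$, so in fact $d_{\gbl}(x,y) \leq C_1 \cdot \min\{d(x,y), d(y,x)\}$.

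First I would establish the length comparison $\ell_{\gbl}(\gamma)\leq C_1\,\ell_F(\gamma)$ and hence the distance comparison just described. Next, let $\{x_i\}$ be a forward Cauchy sequence for $F$: for every $\varepsilon>0$ there is $N$ with $d(x_i,x_{i+k})<\varepsilon$ for all $i\geq N$, $k\geq 0$. By the distance comparison, $d_{\gbl}(x_i,x_{i+k}) \leq C_1 \cdot d(x_i,x_{i+k}) < C_1\varepsilon$, so $\{x_i\}$ is a Cauchy sequence in the metric space $(M,d_{\gbl})$. Since $\gbl$ is complete by hypothesis, $x_i$ converges to some $x_\infty\in M$ in the topology of $\gbl$. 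It remains to check that $x_i\to x_\infty$ also in the Finsler topology; this follows because the identity map $(M,\gbl)\to (M,F)$ is continuous — indeed \eqref{eq:mainA} shows that $\gbl$-small balls are contained in $F$-small forward balls, so the Finsler topology is coarser than (or equal to) the manifold topology induced by $\gbl$, which is the usual manifold topology. Hence $d(x_i, x_\infty)\to 0$, proving forward completeness. The argument for backward completeness is identical, using a backward Cauchy sequence and the same comparison $d_{\gbl}(x_i,x_{i+k})\leq C_1\cdot d(x_{i+k},x_i)$.

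The only mildly delicate point is the topological matching at the end — verifying that convergence in $d_{\gbl}$ implies convergence in the Finsler quasi-metric $d$. This is handled by the pointwise inequality \eqref{eq:mainA}, which implies $d_{\gbl} \leq C_1\, d$ and therefore that every $d$-forward-ball of radius $r$ around $x_\infty$ contains a $\gbl$-ball of radius $r/C_1$; since $\gbl$ induces the standard manifold topology, no further work is needed. Everything else is a routine unwinding of the definitions of forward/backward Cauchy sequences, so the proof is short.
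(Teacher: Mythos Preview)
Your approach is exactly the paper's: use \eqref{eq:mainA} to push a forward (resp.\ backward) $F$-Cauchy sequence to a $\gbl$-Cauchy sequence, then invoke completeness of $\gbl$. The paper's proof is two lines and leaves all the details implicit; your version spells them out.

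There is, however, a genuine slip in your final paragraph. From $d_{\gbl}\leq C_1\,d$ you conclude that ``every $d$-forward-ball of radius $r$ around $x_\infty$ contains a $\gbl$-ball of radius $r/C_1$''. The implication goes the other way: if $d(x_\infty,y)<r$ then $d_{\gbl}(x_\infty,y)<C_1 r$, so the $d$-forward ball of radius $r$ is \emph{contained in} the $\gbl$-ball of radius $C_1 r$, not the reverse. Thus \eqref{eq:mainA} by itself only shows that the $F$-topology is \emph{finer} than (or equal to) the $\gbl$-topology, which is the wrong direction for concluding that $d_{\gbl}(x_i,x_\infty)\to 0$ forces $d(x_i,x_\infty)\to 0$.

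The fix is standard and does not require the Main Theorem at all: since $F$ is continuous and positive off the zero section, on any compact coordinate neighborhood of $x_\infty$ the function $F(x,\xi)/\sqrt{\gbl(\xi,\xi)}$ is bounded above and below on the unit sphere bundle, so $F$ and $\gbl$ are locally bilipschitz. Hence both induce the manifold topology, and $\gbl$-convergence of $x_i$ to $x_\infty$ automatically gives $d(x_i,x_\infty)\to 0$ and $d(x_\infty,x_i)\to 0$. (The paper tacitly assumes this identification of topologies and simply says ``convergent sequence''.)
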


\begin{proof}
 Let $\{x_j\}$ be a forward Cauchy sequence for the metric $F$, then the first statement from the main Theorem implies
 that  $\{x_j\}$ is a Cauchy sequence for the Riemannian metric $\gbl$, it is therefore a convergent sequence by hypothesis.
 The proof for a backward Cauchy sequence is the same.
\end{proof}

\begin{corollary}\label{cor.II}
 Let $(M,F)$ be a quasireversible  Finsler manifold, then
  \begin{enumerate}[a)]
  \item  The Binet-Legendre metric $\gbl$  is complete if and only if  the given Finsler metric $F$ is complete.
  \item  The Riemannian  volume density  of $\gbl$ is comparable to the Busemann density $d\mu_F$.
  \item Two quasireversible  Finsler manifolds are quasi-isometric if and only if the associated Riemannian manifold 
  with their respective Binet-Legendre metrics are quasi-isometrics.
\end{enumerate}
\end{corollary}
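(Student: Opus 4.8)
My plan is to deduce all three statements from Theorem \ref{main:th} and Corollary \ref{cor.I}, using the elementary principle that a pointwise bilipschitz comparison of Minkowski norms propagates both to the associated length structures and to the associated volume densities. The standing remark I would use throughout is that integrating the pointwise inequality (\ref{eq:mainAB}) along curves gives $C_2\, d_F(x,y) \le d_{\gbl}(x,y) \le C_3\, d_F(x,y)$ for every ordered pair $(x,y)$, and that for a quasireversible Finsler metric the (possibly asymmetric) distance $d_F$ is itself bilipschitz to its symmetrization; so the distinction between the forward, backward and symmetrized distances will be immaterial.

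\emph{Statement (a).} The implication ``$\gbl$ complete $\Rightarrow F$ complete'' is precisely Corollary \ref{cor.I}, noting that for a quasireversible metric forward and backward completeness coincide. For the converse I would take a $\gbl$-Cauchy sequence $\{x_j\}$; by the bilipschitz comparison above together with quasireversibility it is a forward Cauchy sequence for $F$, hence it converges to some $x\in M$ by completeness of $F$, and the bound $d_{\gbl}(x,x_j)\le C_3\, d_F(x,x_j)$ then shows $x_j\to x$ for $\gbl$ as well.

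\emph{Statement (b).} I would start from the observation that for any Riemannian metric $g$ and any continuous background density $d\nu$ (inducing the Lebesgue measure $d\tau_x$ on $T_xM$) one has $d\vol_g = d\mu_{\sqrt{g}} = \tfrac{\omega_n}{\tau_x(B_x^g)}\, d\nu$, where $B_x^g\subseteq T_xM$ is the $g$-unit ball; this is just the identity $d\mu_{\sqrt{g}}=d\vol_g$ recorded in Section \ref{definitions} combined with the definition of the Busemann measure. Applying this with $g=\gbl$ and comparing with $d\mu_F=\tfrac{\omega_n}{\tau_x(\Omega_x)}\, d\nu$, the pointwise ratio of the two densities at $x$ equals $\tau_x(\Omega_x)/\tau_x(B_x^{\gbl})$. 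The inclusions $C_2\, B_x^{\gbl}\subseteq \Omega_x \subseteq C_3\, B_x^{\gbl}$, which follow from (\ref{eq:mainAB}), bound this ratio between $C_2^{n}$ and $C_3^{n}$, giving the claim. In the reversible case this can also be seen by chaining through the John metric: $\gbl$ and $\gjohn$ are bilipschitz, bilipschitz Riemannian metrics have comparable volume densities, and $d\mu_F\le d\mujohn\le n^n\, d\mu_F$ by the volume comparison of Section \ref{johnellipse}.

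\emph{Statement (c).} Write $g_i$ for the Binet-Legendre metric of $F_i$ on $M_i$, $i=1,2$. The bilipschitz comparison above makes the identity map a bilipschitz homeomorphism of $(M_i,d_{F_i})$ onto $(M_i,d_{g_i})$, and in particular a quasi-isometry (with vanishing additive constant). Since quasi-isometry is an equivalence relation on (quasi-)metric spaces — a composition of quasi-isometries is a quasi-isometry — transitivity immediately yields that $(M_1,d_{F_1})$ and $(M_2,d_{F_2})$ are quasi-isometric if and only if $(M_1,g_1)$ and $(M_2,g_2)$ are. The only real obstacle in all three parts is the careful bookkeeping of the orientation of the Finsler distance — making sure ``Cauchy'' and ``quasi-isometry'' are read with the correct handedness and invoking quasireversibility to pass between the forward, backward and symmetrized distances; the quantitative estimates themselves are immediate from Theorem \ref{main:th}.
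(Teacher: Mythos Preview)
Your proof is correct and follows essentially the same route as the paper: all three parts are deduced directly from the bilipschitz equivalence in Theorem \ref{main:th}, with (b) reduced to the inclusion $C_2\, B_x^{\gbl}\subseteq \Omega_x \subseteq C_3\, B_x^{\gbl}$ and (c) to the fact that a bilipschitz map is a quasi-isometry and quasi-isometry is transitive. The only differences are cosmetic: you spell out the Cauchy-sequence argument for (a) and invoke Corollary \ref{cor.I} for one direction, whereas the paper simply cites stability of completeness under bilipschitz equivalence; and your supplementary remark routing (b) through the John metric is an extra observation not present in the paper.
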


\begin{proof}
The property (a) is an immediate consequence of the Main Theorem since completeness is a property which is stable under  bilipschitz equivalence.

\smallskip 

Property (b) is also a consequence of the Main Theorem:  Let us denote by $d\mubl$ the  Riemannian  volume density  of $\gbl$, then the  Busemann density
is 
$$d\mu_F = \frac{\omega_n}{\mubl(\Omega_x)} d\mubl,$$
where $\Omega_x$ is the unit ball in $T_xM$ for the Finsler metric $F$.
Because $\gbl$ is bilipschitz equivalent to $F$, we have $\frac{1}{k} \cdot B_x \subset \Omega_x \subset k \cdot B_x$ for some constant $k$
where $B_x \subseteq T_xM$ is the unit ball for the metric $\gbl$. It follows at once that
\begin{equation} \label{eq.compvol}
 {\omega_n} k^{-n}  d \mubl  \leq  d\mu_F \leq  {\omega_n} k^n   d\mubl.
\end{equation}

\smallskip 

To prove (c), recall that the Finsler manifolds $(M_1,F_1)$ and  $(M_2,F_2)$ are quasi-isometric if there exists a
map $f : M_1 \to M_2$ and a constant $A$ such that for any $p,q \in M_1$ we have $d_{F_2}(f(p),f(q)) \leq A\cdot (d_{F_1}(p,q)+1)$
and for any $y \in M_2$ there exists $x\in M_1$ with $d_{F_2}(f(x),y)  \leq A$ (see e.g. \cite[\S 8.3]{B-B-I}). It is known that quasi-isometry is an equivalence relation among metric spaces and bilipschitz equivalence is clearly a special case of quasi-isometry. The claim follows thus  also immediately from the main Theorem. 
\end{proof}
 
\begin{remark}  The first inequality in (\ref{eq.compvol}) can be improved: it is known that the Riemannian  volume is in fact  always smaller or equal to the Busemann measure, that is  $d\mubl \leq d\mu_F$ and the equality holds if and only if $F$ is Riemannian. This fact also holds without the reversibility assumption and follows e.g. from   \cite[Theorem 1]{LYZ2000}, see also \cite[Theorem 3.2]{Centore}.  
\end{remark}
 
\section{Examples and Applications}   \label{sec.convexdomain}
 
\subsection{Zermelo Metrics in a domain}  \label{Zermelo}

Let us consider a bounded convex domain $\Omega$  in $\r^n$  and a 
$C^k$-map $u : \mathcal{U} \to \Omega$ where $\mathcal{U} \subset \r^n$ is an arbitrary domain
(one may, but need not, assume that $\mathcal{U}=\Omega$).  
The Finsler metric $F_u$ on $\mathcal{U}$ whose associated tangent unit ball at $x \in \mathcal{U}$ 
is the domain $\Omega$ centered at $u(x)$ is called the \emph{Zermelo metric} associated to the map $u : \mathcal{U} \to \Omega$.
Note that in this definition we use the canonical identification $T_x\mathcal{U} \equiv \r^n$.
The Finslerian unit tangent ball at $x \in \mathcal{U}$ is thus given by
$$
  \Omega_x = \Omega - u(x) =  \{ \xi \in \r^n \mid \xi + u(x)  \in \Omega\}.
$$
The   Finsler  metric $F$ is then given by   
$$
 F_u(x,\xi) = \inf \{t>0 \tq \xi \in t (\Omega+u(x))\} =  \inf \left\{t>0 \tq \left(\frac{\xi}{t} + u(x)\right)\in \Omega \right\}.
$$
Equivalently, for any $\xi \neq 0$:
$$
 F_u(x,\xi) > 0 \quad \text{and} \quad \left(\frac{\xi}{F_u(x,\xi)} + u(x) \right)  \in \partial  \Omega.
$$
We refer to \cite[\S 1.4]{ChernShen} for a discussion of the Zermelo metric and the relation with Zermelo's navigation 
problem\footnote{Note that \cite{ChernShen} have an opposite sign
convention for the vector field $u$}.

\smallskip

\psset{xunit=0.85cm,yunit=0.85cm,algebraic=true,dotstyle=o,dotsize=3pt 0,linewidth=0.8pt,arrowsize=3pt 2,arrowinset=0.25}
\begin{pspicture*}(-16.5,-0.9)(-2,5.3)
 \pscurve[linewidth=1pt](-14.59,3.27)(-12.8,4.34)(-9.94,4.5)(-8.53,2.94)(-8.08,0.65)(-9.13,0)(-11.35,1.58)(-13.39,0.27)(-15.84,1.07)(-15.55,2.34)(-14.59,3.27)
 \rput{-29.05}(-3.33,2.06){\psellipse[linecolor=gray,fillcolor=gray,fillstyle=solid,opacity=0.05](0,0)(1.09,0.91)}
\rput{-29.05}(-10.5,3.05){\psellipse[linecolor=gray,fillcolor=gray,fillstyle=solid,opacity=0.05](0,0)(1.09,0.91)}
 \pscurve[linewidth=1pt]{->}(-10.96,3.48)(-8.6,3.96)(-6.8,3.9)(-5.03,3.33)(-3.85,2.5) 
\psdots[dotstyle=*](-10.96,3.48)
\psdots[dotstyle=*](-3.78,2.46)
\rput[bl](-6.8,4.015){$u$}
\rput[bl](-11.28,3.48){$x$}
\rput[bl](-3.8,2.49){$u(x)$}
\rput[bl](-4.16,0.9 ){$\Omega$}
\rput[bl](-11.9,2.5){$\Omega_x$}
\rput[bl](-13.31,-0.1){$\mathcal{U}$}
\end{pspicture*}

\centerline{{\begin{minipage}{10cm} \small
 Figure 1. A Zermelo metric in the domain $ \mathcal{U}$, the Finlser unit ball at $x$ 
is given by $\Omega$ with $u(x)$ as origin. 
\end{minipage}}}

\begin{examples}
\ \textbf{(a)}  If $u(x) = c \in \Omega$ is constant, then the corresponding Zermelo    metric is is invariant by translation, it is thus the  Minkowski metric whose unit ball
is given by $\Omega -c$.
\\  \textbf{(b)} If $\mathcal{U} = \Omega$ and $u(x) = x$ is the identity map, then the corresponding Zermelo metric is called the 
\emph{Funk metric}  and denoted by $\Ffunk$. The Finsler unit ball at the point $x\in \Omega$ is the convex domain $\Omega$ itself, but with the point $x$ as its
center (this metric  is therefore also called the \emph{tautological Finsler structure}).  
\\  \textbf{(c)} The reverse of a Zermelo metric is also a Zermelo metric. Recall that the reverse of a Finsler metric $F$ is the Finsler metric
${}^rF$ given by ${}^rF(x,\xi) = F(x,-\xi)$. In the case of the Zermelo metric $F_u$ associated to the map $u : \mathcal{U} \to \Omega$, we easily check that the
reverse metric ${}^rF_u$ is the Zermelo metric  associated to the map $-u : \mathcal{U} \to -\Omega$, that is we have the identity
$$
  {}^rF_u(x,\xi) = F_{-u}(x,\xi) = F_u(x, -\xi).
$$
\textbf{(d)} In particular the reverse of the Funk metric, which is denoted by $\RFfunk$, is the Zermelo metric in $\mathcal{U}=\Omega$
associated to the map $u : \Omega \to - \Omega$ given by $u(x) = -x$.  The Finsler unit ball is the symmetric 
image of $\Omega$ with respect to the center of symmetry at  $x$.
\end{examples}


We refer to \cite{ChernShen} and Chapters 2 and 3 in \cite{Handbook}  for some background on Funk and
reverse Funk geometry. In particular, the following formula  for the distance is well known: if   
$p$ and $q$ are distinct  points in $\Omega$ and $a,b$  are the two points lying on the intersection of the line through $p$ and $q$ 
with  the boundary $\partial \Omega$,
and if  $a,q,p,b$  appear in that order on that line then 
\begin{equation} \label{eq.dfunk}
    \dfunk(p,q) = \log \left(\frac{|a-p|}{|a-q|}\right) \quad  \text{and}  \quad
     \drfunk(p,q) =   \log \left(\frac{|b-q|}{|b-p|}\right).
\end{equation}

\begin{pspicture*}(-7,-3.0)(5.62,2.8)
\psset{xunit=0.7cm,yunit=0.7cm,algebraic=true,dotstyle=*,dotsize=3pt 0,linewidth=0.8pt,arrowsize=3pt 2,arrowinset=0.25}
\pscurve[linewidth=1.2pt](-2,2)(0.33,2.95)(1.98,2.36)(3.09,0.78)(2,-2)(-1.2,-2.9)(-3,-2)
\psline[linewidth=1.2pt](-2,2)(-3.16,0.53)
\psline[linewidth=1.2pt](-3.16,0.53)(-3,-2)
\psline[linestyle=dashed,dash=1pt 1pt](-1.2,-2.9)(1.98,2.36)
\rput(3,-1.43){$\partial \Omega$}
\psdots[dotsize=0.12](0,-0.92)
\rput(0.32,-0.94){$p$} 
\psdots[dotsize=0.12](0.99,0.72)
\rput(1.28,0.68){$q$} 
\psdots[dotsize=0.12](-1.2,-2.9)
\rput(-1.4,-3.15){$b$} 
\psdots[dotsize=0.12](1.98,2.38)
\rput(2.2,2.6){$a$} 
\end{pspicture*}   

\centerline{{\begin{minipage}{10cm} \small
 Figure 2. The distance in the Funk or reverse Funk metric is given by the logarithm of the ratio 
 of the Euclidean distances to the boundary. 
\end{minipage}}}
 
\bigskip

The following facts are classical:
\begin{proposition}  Let   $\Omega\subset \r^n$ be a bounded convex domain, then 
 \begin{enumerate}
 \item The reverse Funk metric satisfies $\RFfunk(x,\xi) = \Ffunk(x,-\xi)$. They are both 
  invariant under affine transformations preserving  $\Omega$.
  \item The Funk metric in  $\Omega$  is forward complete but not backward complete.
  The reverse Funk metric is backward complete and not forward complete.  
  \item  Both  metrics are projective, meaning that the Euclidean straight lines are geodesics.
 \item  If the bounded convex domain $\Omega\subset \r^n$ has a boundary of class $C^k$, then 
  $\Ffunk$ and $\RFfunk$ are also of class $C^k$ (on the complement of the zero section). 
\end{enumerate}
\end{proposition}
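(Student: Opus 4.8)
The plan is to verify the four assertions in turn, all of which follow from the definition of the Funk metric, the distance formula \eqref{eq.dfunk}, and elementary convexity of $\Omega$. For \textbf{(1)}, the identity $\RFfunk(x,\xi)=\Ffunk(x,-\xi)$ is the definition of the reverse of a Finsler metric applied to $\Ffunk$; for the affine invariance, write an affine bijection preserving $\Omega$ as $A(y)=Ly+v$, so that $L\Omega+v=\Omega$, whence $dA_x(\Omega_x)=L(\Omega-x)=\Omega-A(x)=\Omega_{A(x)}$. Thus $A$ carries the Funk unit ball at $x$ onto the one at $A(x)$ and is an $\Ffunk$-isometry, hence also an $\RFfunk$-isometry since it commutes with $\xi\mapsto-\xi$ on each tangent space. (Equivalently, invariance is read off \eqref{eq.dfunk}, since $A$ preserves lines, the order of collinear points, $\partial\Omega$, and ratios of lengths along a line.) For \textbf{(3)}, if $p,q,r$ are collinear in this order then the boundary point $a$ on the ray from $p$ through $r$ serves all three, and \eqref{eq.dfunk} gives $\dfunk(p,q)+\dfunk(q,r)=\dfunk(p,r)$; moreover a direct integration of $F$ along the parametrized segment $[p,r]$ gives $\ell_{\Ffunk}([p,r])=\dfunk(p,r)$, so the straight segment realizes the distance and is therefore a geodesic. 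The same argument applies to $\RFfunk$, with the opposite boundary point $b$ in place of $a$.

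For \textbf{(2)} I first compute the forward metric balls of $\Ffunk$: by \eqref{eq.dfunk}, $\dfunk(x,y)<r$ is equivalent to $|a-y|>e^{-r}|a-x|$, where $a\in\partial\Omega$ is the boundary point on the ray from $x$ through $y$, and taking the union over all directions yields
\[
 \{y\in\Omega\ :\ \dfunk(x,y)<r\}=x+(1-e^{-r})(\Omega-x),
\]
whose closure is a compact subset of $\Omega$. Hence a forward Cauchy sequence, being forward bounded, lies in a compact subset of $\Omega$; extracting a Euclidean limit point $x_\ast\in\Omega$ and using the continuity of $\dfunk$ together with the triangle inequality, the whole sequence forward-converges to $x_\ast$, so $\Ffunk$ is forward complete. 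Since $\drfunk(p,q)=\dfunk(q,p)$, this is exactly backward completeness of $\RFfunk$. For the opposite failures, fix a chord of $\Omega$ with endpoints $a,b\in\partial\Omega$ and choose $x_i$ on it with $|x_i-a|=2^{-i}$; then $x_i\to a\notin\Omega$, while
\[
 \dfunk(x_{i+k},x_i)=\log\frac{|b-x_{i+k}|}{|b-x_i|}\longrightarrow 0
\]
as $i\to\infty$, uniformly in $k\ge0$, because both lengths tend to $|b-a|$. Thus $\{x_i\}$ is backward Cauchy with no limit, so $\Ffunk$ is not backward complete, and applying $\drfunk(p,q)=\dfunk(q,p)$ once more shows $\RFfunk$ is not forward complete.

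For \textbf{(4)}, smoothness is local, so fix $(x_0,\xi_0)$ with $x_0\in\Omega$, $\xi_0\ne0$, put $t_0=\Ffunk(x_0,\xi_0)$ and $z_0=x_0+\xi_0/t_0\in\partial\Omega$. As $\partial\Omega$ is a $C^k$ hypersurface, choose a $C^k$ function $\rho$ near $z_0$ with $\partial\Omega=\{\rho=0\}$ locally and $\nabla\rho(z_0)\ne0$; then $t=\Ffunk(x,\xi)$ solves $\rho(x+\xi/t)=0$ near $t_0$, and
\[
 \frac{\partial}{\partial t}\,\rho\!\left(x+\tfrac{\xi}{t}\right)\Big|_{(x_0,\xi_0,t_0)}=-t_0^{-1}\,\bigl\langle\nabla\rho(z_0),\,z_0-x_0\bigr\rangle\ne0,
\]
since $\nabla\rho(z_0)$ is an outer normal to the convex body $\Omega$ at $z_0$ and $z_0-x_0$ points from an interior point to a boundary point, so the pairing is strictly positive. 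The implicit function theorem then gives $\Ffunk\in C^k$ near $(x_0,\xi_0)$, and $\RFfunk(x,\xi)=\Ffunk(x,-\xi)$ is $C^k$ off the zero section as well. The only steps that are more than bookkeeping are the identification of the forward Funk balls as shrunken copies of $\Omega$ in \textbf{(2)} and the transversality inequality $\langle\nabla\rho(z_0),z_0-x_0\rangle>0$ in \textbf{(4)} --- precisely the points where the convexity of $\Omega$ is used --- and I expect \textbf{(2)} to be the part most worth spelling out in full.
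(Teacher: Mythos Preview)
Your proof is correct and considerably more explicit than what the paper itself offers: the paper simply declares part (1) ``obvious,'' cites \cite{ChernShen} and \cite{Handbook} for parts (2) and (3), and says part (4) ``follows from the implicit function theorem'' without further comment. Your treatment of (4) is exactly the argument the paper alludes to, spelled out with the transversality condition $\langle\nabla\rho(z_0),z_0-x_0\rangle>0$ made explicit; your handling of (2) via the identification of forward balls as homothetic copies $x+(1-e^{-r})(\Omega-x)$ of the domain is the standard route taken in the references, and your additivity computation for (3) is likewise the classical one. In short, where the paper defers to the literature you have supplied the actual arguments, and they are all sound.
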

The  first statement is obvious and statement 2 and 3 are proved in   \cite{ChernShen} 
for  domains with smooth and strongly convex domains and in Chapters 2 and 3 in \cite{Handbook}.
The last statement follows from the implicit function theorem.

\smallskip 
 
\subsection{Computation of the Binet-Legendre metric for a Zermelo metric}\label{se.ZBL}

Since the Funk metric is not backward complete,  it follows from Corollary  \ref{cor.I} that its associated 
Binet-Legendre metric is  incomplete. In this section we provide another proof for the incompleteness. More 
generally we  compute the Binet-Legendre metric for a general
Zermelo metric in a domain $\mathcal{U}$ and show that it is never complete unless $\mathcal{U}= \r^n$.

\smallskip

We will in fact consider a more general situation.  A  Borel probability $\mu$  measure on $\r^n$ 
is said to have \emph{finite quadratic moment} if 
\begin{equation}\label{cond.mu1}
  \int_{\r^n} |x|^2d\mu (x) < \infty,
\end{equation}
where $|x|$ is the Euclidean norm. The probability $\mu$ is said to be \emph{affinely non degenerate} 
if for any non zero linear form $\varphi : \r^n \to \r$ and any point $a$ in $\r^n$ we have
\begin{equation}\label{cond.mu2}
  \int_{\r^n} \varphi^2(x-a)d\mu (x) > 0.
\end{equation}
Equivalently the support of the measure $\mu$ is not contained in an affine hyperplane.
Given such a measure  $\mu$ satisfying satisfying (\ref{cond.mu1}) and (\ref{cond.mu2}) we associate to any 
$C^k$  smooth function $u : \mathcal{U} \to \r^n$  the following scalar product on $(\r^n)^*$:
$$
 g^*_x(\theta,\varphi) = \gamma \int_{\r^n}\theta(\zeta - u(x)) \cdot  \varphi(\zeta - u(x))\,  d\mu (\zeta),
$$
where the constant $\gamma >0$ is an arbitrary parameter. 
If the measure is centered at the origin, that is $\int d\mu = 0$, then we have    
 \begin{eqnarray*} 
 g^*_x(\theta, \varphi) 
 & = & \gamma \left\{  \int_{\mathbb{R}^n}  \theta(\zeta) \varphi(\zeta) d\mu (\zeta)  
 -  \theta(u(x))   \int_{\mathbb{R}^n}   \varphi(\zeta) d\mu (\zeta) \right.
 \\ & & \hspace{2.4cm} -  \left. \varphi(u(x))  \int_{\mathbb{R}^n}  \theta(\zeta) d\mu (\zeta)  +  \theta(u(x))   \varphi(u(x))  
  \right\}
\\  & = &  \gamma \int_{\r^n}\theta(\zeta) \varphi(\zeta) d\mu (\zeta)
  + \gamma \cdot \theta(u(x)) \varphi(u(x))
\\  & = &  g^*_0(\theta,\varphi)  
  + \gamma \cdot \theta(u(x)) \varphi(u(x)).
\end{eqnarray*} 
In the second  equality we have used $ \int    \varphi(\zeta) d\mu (\zeta)
= \varphi\left(\int \zeta  d\mu (\zeta)\right) = 0$.
If we furthermore assume that the   coordinates
are chosen to be orthonormal for the   metric $g_0$ at the origin, then the coefficients
of $g^*_x$ are
\begin{equation} \label{moment.dualmetric}
 g^{ij} = g^*_x(\varepsilon^i, \varepsilon^j) = \delta^{ij} + \gamma u_iu_j,
\end{equation}
where $\varepsilon^1, \dots, \varepsilon^n$ is the dual canonical basis.
Inverting this matrix, one obtains the following Riemannian metric 
on $\r^n$:
\begin{equation} \label{moment.metric}
 g_{ij} = \delta_{ij} - \frac{\gamma \,   u_iu_j}{1+\gamma |u(x)|^2}. 
\end{equation}

\medskip

The Binet-Legendre metric for the Zermelo metric corresponding to the function $u : \mathcal{U} \to \Omega$, 
where $\Omega$ is a bounded convex domain,  is the special case of this construction 
corresponding to the constant $\gamma = (n-2)$ and the measure $d\mu =  
\frac{1}{\Vol (\Omega)}\chi_{\Omega}\,dx$. Indeed, by definition of the Zermelo metric, the Finsler tangent ball at  a point $x\in \mathcal{U}$ is given by
$$
 \Omega_x = \{\xi \in T_x\mathcal{U} \mid  F (x,\xi) < 1\} = \{\xi \in \r^n \mid \xi \in (\Omega - u(x) )\} =  \Omega - u(x),
$$ 
(here we use the canonical identification $T_x\mathcal{U} = \r^n$).   The dual Binet-Legendre metric associated to the Zermelo metric  is then given by
\begin{eqnarray*}
 g^*_x(\theta, \varphi) & = & \frac{(n+2)}{\Vol(\Omega_x)} \int_{\Omega_x} \theta(\xi) \varphi(\xi) d\xi
=
  \frac{(n+2)}{\Vol(\Omega_x)}\int_{\Omega} \theta(\zeta-u(x)) \varphi(\zeta-u(x)) d\zeta.
\end{eqnarray*} 
It follows that  in an appropriate coordinate system, the Binet-Legendre associated to 
a Zermelo metric in a domain  $\mathcal{U}$ is given by $(\ref{moment.metric})$ with $\gamma = (n+2)$.

\smallskip

Observe in particular that since $u(x)$ belongs to the bounded domain $\Omega$ for any $x \in \mathcal{U}$,
the tensors  $(\ref{moment.metric})$  and $(\ref{moment.dualmetric})$ are always bounded. This implies in particular that the 
Binet-Legendre metric of a Zermelo metric in a domain  $\mathcal{U}$ is bilipschitz equivalent to the Euclidean metric.
In particular it is complete if and only if    $\mathcal{U} = \r^n$. 

\begin{remark}
 In the special case of the Funk or reverse Funk metric, we have $u(x) = \pm x$. 
 It follows from  (\ref{moment.metric})  that for any bounded convex domain $\Omega \subset \r^n$, 
 the Binet-Legendre metric is given in some 
 coordinate system  by
 \begin{equation*} \label{moment.Funkmetric}
 g_{ij} = \delta_{ij} - \frac{\gamma \,   x_ix_j}{1+\gamma |x|^2}. 
\end{equation*}
Observe that this formula is independent of the geometry of $\Omega$.
\end{remark}

\begin{remark} 
 The previous construction of a metric associated to a probability measure in $\r^n$ 
 is natural in  multivariate statistics. Let  $X_1, \dots , X_n$ be random variables
 and assume that the random vector  $X = (X_1, \dots , X_n)$ is non degenerate.
 Recall that this means that there are no constants $a_1, \dots, a_n$ and $C$ such
 that $ \mbox{Prob}(\sum_i a_iX_i=C) = 1$. Assume also  that the 
 random vector $X$  has finite second moments, that is $\mathbb{E}(X_i^2)<\infty$ ($1\leq i \leq n$),
 where $\mathbb{E}(\,  )$ is the expectation.
The \emph{joint distribution} of those variables is the probability measure $\mu$ on $\r^n$ defined by
$\mu(B) = \mbox{Prob}(X\in B)$   for any Borel set $B \subset \r^n$,  and under the given hypothesis
the measure $\mu$ satisfies the previous conditions  (\ref{cond.mu1}) and (\ref{cond.mu2}).
Choosing the function $u(x) = x$, the corresponding metric  $g_x^{ij}$   in $(\r^n)^*$ is then the 
matrix of \emph{product moments}:
$$
 g_x^{ij} =  \mathbb{E}((X_i-x_i)(X_j-x_j)).
$$
At the barycenter of $\mu$, this matrix is the \emph{covariance matrix} of the random vector $X$ and is often denoted by $\Sigma$. 
The inverse matrix $g_{ij}$ is called the \emph{precision} or \emph{concentration} matrix. In the case of Gaussian random variables, 
this matrix  is related to  conditional independencies between the random variables.
\end{remark}

 \smallskip 
 
\subsection{An example of a complete metric with incomplete Binet-Legendre metric} \label{sec.counterexample}

In this subsection we briefly give an example of a Finsler metric that is both forward and  backward complete
and whose associated Binet-Legendre metric is incomplete, showing that the converse to Corollary \ref{cor.I}  fails.

\medskip

The example is given by a Zermelo metric that interpolates between the Funk metric (which is forward complete) and 
the reverse Funk metric (which is backward complete). It can be built in any bounded convex domain, but we will only 
describe it in the standard unit ball $\mathbb{B}^n \subset \r^n$. 

\medskip

Using (\ref{eq.dfunk}), we see that  the  Funk distance in $\mathbb{B}^n$ from the origin to a point $x\in \mathbb{B}^n$ is given by
$$
  \dfunk(0,x) = \log\left(\frac{1}{1-|x|} \right),
$$
therefore the open ball of radius $t$ centered at the origin for the Funk metric is given by
$$
  W_t = \{ x \in \r^n \mid |x| < 1-\mathrm{e}^{-t}\}.
$$
Let us now choose a smooth  function $u : \mathbb{B}^n \to \mathbb{B}^n$ such that for any integer $k \in \mathbb{N}$
we have
$$
 u(x) =  \begin{cases}
    \  x  & \text{ if } \,   x\in W_{4k+1} \setminus W_{4k}, \\
   -x  & \text{ if } \, x\in W_{4k+3} \setminus W_{4k+2}.
\end{cases}
$$
The Zermelo metric $F_u$ associated to the function $u$, coincides with the Funk metric
in $W_{4k+1} \setminus W_{4k}$ and to the reverse Funk metric in 
$W_{4k+3} \setminus W_{4k+2}$ for any integer $k \in \mathbb{N}$.
In particular we have 
$$
 x \not\in W_{4k+3} \quad  \Longrightarrow \quad d_u(0,x) \geq k  \text{ and } d_u(x,0) \geq k. 
$$
Because  $W_{4k+3}$ is relatively compact in $\mathbb{B}^n$, it is 
clear that the metric $F_u$ is both forward and backward complete. 
Since we proved in the previous subsection that the associated  Binet-Legendre 
metric is not complete, we have produced an example of a complete Finsler metric
with incomplete Binet-Legendre  metric.

  \medskip
  
\subsection{The  Hilbert metric and the  ``affine metric'' in a bounded convex domain}\label{sec.Hilbert}

The symmetrization of the Funk metric in a bounded convex domain $\Omega$ is called
the\emph{ Hilbert metric} in that domain, the Finsler norm is thus given by
\begin{equation} 
    \Fhilb(x, \xi) = \frac{1}{2}(\Ffunk(p,\xi) + \Ffunk(p,-\xi)).
\end{equation}
Referring to the notations in Figure 2, we have the
following formula for the   distance between two points $p$ and $q$:
\begin{equation} \label{eq.dhilb}
\dhilb(p,q) =    \frac{1}{2} \left( \dfunk(p,q)+ \dfunk(q,p) \right)
     =    \frac{1}{2}\log \left(\frac{|a-p|}{|a-q|} \cdot \frac{|b-q|}{|b-p|}\right),
\end{equation}
 We  refer to the books \cite{Busemann1955,Papadopoulos} for a short introduction to Hilbert Geometry 
 and to \cite{Handbook} for  an overview of some recent developments.  

\medskip

We have the following result about the Binet-Legendre metric associated to the Hilbert metric:

\begin{proposition}  
The Binet-Legendre Metric associated to the Hilbert metric in a  bounded convex domain $\Omega$ is a complete Riemannian metric,
that is it is invariant under the group of  projective transformations preserving the domain. It is bilipschitz equivalent to the Hilbert metric.
\end{proposition}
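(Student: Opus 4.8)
The plan is to deduce everything from the Main Theorem together with the standard facts about the Hilbert metric. Two distinct things must be checked: bilipschitz equivalence to the Hilbert metric, and completeness plus projective invariance. The bilipschitz statement is essentially immediate, while completeness requires an extra geometric input about the Hilbert metric; I expect the latter to be the only nontrivial point.

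\textbf{Bilipschitz equivalence.} The Hilbert metric $\Fhilb$ is reversible by construction, so in particular it is $1$-quasireversible. Applying the second statement of Theorem~\ref{main:th} directly gives constants $C_2, C_3 > 0$ with
$$
 C_2 \cdot \Fhilb(x,\xi) \leq \sqrt{\gbl(\xi,\xi)} \leq C_3 \cdot \Fhilb(x,\xi)
$$
for all $(x,\xi) \in T\Omega$, which is exactly the asserted bilipschitz equivalence. One may even invoke the sharper reversible estimate \eqref{eq.pfst5}, namely $n^{-n/2}\Fhilb \leq \sqrt{\gbl} \leq n^{(n+1)/2}\Fhilb$.

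\textbf{Completeness.} Since $\gbl$ is bilipschitz equivalent to $\Fhilb$ and $\Fhilb$ is reversible (hence its distance is symmetric), completeness of $(\Omega, \gbl)$ is equivalent to completeness of the Hilbert metric $(\Omega, \dhilb)$; completeness is stable under bilipschitz equivalence, exactly as used in Corollary~\ref{cor.II}(a). So it suffices to recall that the Hilbert metric on a bounded convex domain is complete. This is classical: from formula \eqref{eq.dhilb}, as $q$ approaches $\partial\Omega$ along a segment, one of the ratios $\frac{|a-q|}{|a-p|}$ or $\frac{|b-q|}{|b-p|}$ tends to $0$, so $\dhilb(p,q) \to \infty$; hence every closed metric ball $\{q : \dhilb(p,q) \leq r\}$ is a closed subset of $\Omega$ bounded away from $\partial\Omega$, thus compact, and the metric space is complete (a reference such as \cite{Busemann1955} or \cite{Papadopoulos} can be cited). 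Pulling this back through the bilipschitz equivalence yields completeness of $\gbl$.

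\textbf{Projective invariance.} If $\varphi$ is a projective transformation of $\r^n$ preserving $\Omega$, then $\varphi$ preserves the cross-ratios appearing in \eqref{eq.dfunk}, hence is an isometry of $\Ffunk$ and therefore of its symmetrization $\Fhilb$. By Property~(b) of Theorem~\ref{th.BLproperties1}, any isometry of $(\Omega, \Fhilb)$ is an isometry of $(\Omega, \gbl)$; thus $\gbl$ is invariant under the projective automorphism group of $\Omega$. Assembling the three parts completes the proof. The one genuinely external ingredient is the completeness of the Hilbert metric itself — everything else is a direct citation of the Main Theorem and of Theorem~\ref{th.BLproperties1} — so that is where a little care (or a precise reference) is needed.
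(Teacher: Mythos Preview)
Your proof is correct and follows essentially the same route as the paper: reversibility of $\Fhilb$ plus the Main Theorem for bilipschitz equivalence, completeness of the Hilbert metric transferred via bilipschitz equivalence, and Theorem~\ref{th.BLproperties1}(b) for projective invariance. One small slip: in the projective-invariance paragraph you refer to ``the cross-ratios appearing in \eqref{eq.dfunk}'' and conclude that $\varphi$ is an isometry of $\Ffunk$, but \eqref{eq.dfunk} contains no cross-ratio and the Funk metric is only \emph{affinely} invariant; the cross-ratio (and hence projective invariance) appears only in the Hilbert formula \eqref{eq.dhilb}, which is all you actually need.
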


\begin{proof}
The  Hilbert metric is clearly reversible and it  is not difficult to check from the formula (\ref{eq.dhilb}) that 
it is  complete. The second statement in Theorem \ref{main:th} implies that its associated Binet-Legendre 
metric is  bilipshitz equivalent to the Hilbert metric, in particular it is also  complete.

The Hilbert metric is   invariant under projective transformations since the distance is expressed in terms of 
the cross ratio of four aligned points. Using statement (b) from Theorem \ref{th.BLproperties1}, we deduce
that the Binet-Legendre metric is also   invariant under projective transformations
\end{proof}

\bigskip

 Another  important projectively invariant metric in a convex domain can be constructed from 
 the solution to some Monge-Ampère equation. It is 
 based on the following 
\begin{theorem}
 Let $\mathcal{U} \subset \r^n$ be an arbitrary  bounded convex domain. Then there exists a unique 
 solution to the   following Monge-Ampère equation:
\begin{equation} \label{eq.MongeAmpere}
   \det \left(\frac{\partial ^2 u}{\partial x_i \partial x_j} \right) = \left(-\frac{1}{u}\right)^{n+2},  
\end{equation}
which is smooth, positive, strictly concave,  continuous in the closure $\overline{\mathcal{U}}$ and vanishes on the
boundary $\partial\mathcal{U}$.
\end{theorem}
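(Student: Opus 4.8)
The plan is to reduce the statement to the standard convex normalization of the equation and then to organise the classical argument of Cheng and Yau. First I would substitute $v=-u$; then the assertion becomes the existence and uniqueness of a smooth \emph{convex} function $v$ on $\mathcal{U}$, negative in the interior, continuous up to $\overline{\mathcal{U}}$ and vanishing on $\partial\mathcal{U}$, solving $\det\bigl(\partial^2 v/\partial x_i\partial x_j\bigr)=(-1/v)^{n+2}$ in $\mathcal{U}$. The sign bookkeeping is routine: $\det D^2 v=(-1)^n\det D^2 u$ and $(-1)^n(-1/u)^{n+2}=(-1/v)^{n+2}$ since $n$ and $n+2$ have the same parity. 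This is exactly the Monge--Amp\`ere equation whose graph is the hyperbolic affine hypersphere asymptotic to the cone over $\mathcal{U}$, and the theorem is due to Cheng and Yau (with earlier special cases: Loewner--Nirenberg for $n=2$, and related work of Calabi and Pogorelov); so the task is to recall how their proof goes. I would split it into (i) uniqueness via a comparison principle, (ii) existence by exhaustion together with a priori estimates, and (iii) interior regularity and the boundary behaviour.

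For uniqueness I would proceed as follows. Given two solutions $v_1,v_2$, the two-sided barrier bounds from step (iii) force the ratio $v_1/v_2$ to tend to $1$ at $\partial\mathcal{U}$, so it extends to a continuous function on $\overline{\mathcal{U}}$ equal to $1$ on the boundary. Let $t_0=\max_{\overline{\mathcal{U}}}(v_1/v_2)\ge 1$. If $t_0>1$, the maximum is attained at an interior point $x_0$, where $v_1=t_0 v_2$ and $D^2(t_0 v_2-v_1)(x_0)\le 0$, i.e. $t_0\,D^2 v_2(x_0)\le D^2 v_1(x_0)$ as positive definite matrices. Taking determinants and using the equation gives $t_0^{\,n}(-1/v_2(x_0))^{n+2}\le(-1/v_1(x_0))^{n+2}=t_0^{-(n+2)}(-1/v_2(x_0))^{n+2}$, hence $t_0^{\,2n+2}\le 1$, contradicting $t_0>1$. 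Therefore $v_1\le v_2$, and by symmetry $v_1=v_2$.

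For existence I would exhaust $\mathcal{U}$ by smooth, bounded, strictly convex domains $\mathcal{U}_j\nearrow\mathcal{U}$ and solve, on each $\mathcal{U}_j$, the Dirichlet problem $\det D^2 v_j=(-1/v_j)^{n+2}$, $v_j|_{\partial\mathcal{U}_j}=0$, with $v_j$ convex. Since the right-hand side is singular along the boundary this is not covered directly by the Caffarelli--Nirenberg--Spruck solvability theory, so I would first solve a regularized problem (truncating the right-hand side, or prescribing a small negative boundary value $-1/k$, which keeps the right-hand side bounded because a convex function with constant boundary value stays below it), obtain a priori estimates uniform in the regularization, and pass to the limit. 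The decisive bounds come from explicit barriers: on a ball the equation is solved by a radial function, so an inscribed and a circumscribed ball at each boundary point give a subsolution and a supersolution trapping $v_j$ between two functions that vanish on $\partial\mathcal{U}_j$, and this also controls the blow-up rate of $|Dv_j|$ there. The comparison principle gives monotonicity in the domain ($\mathcal{U}_i\subset\mathcal{U}_j\Rightarrow v_i\ge v_j$ on $\mathcal{U}_i$), so $v:=\lim_j v_j$ exists pointwise; the two-sided ball barriers show $v$ is locally bounded away from $0$ and from $-\infty$, hence is a convex generalized (Alexandrov) solution on $\mathcal{U}$. In the interior the right-hand side is locally smooth and positive, so Pogorelov's and Caffarelli's interior estimates for Monge--Amp\`ere plus a bootstrap yield $v\in C^\infty(\mathcal{U})$, and positivity of the right-hand side forces $D^2 v>0$, i.e. strict convexity. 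For the boundary values I would once more use barriers adapted to the convexity of $\mathcal{U}$ — comparison with the explicit solution on a large circumscribed ball on one side and with concave comparison functions built on supporting hyperplanes on the other — to conclude that $v$ extends continuously to $\overline{\mathcal{U}}$ with $v|_{\partial\mathcal{U}}=0$, in fact with $v(x)\asymp-\operatorname{dist}(x,\partial\mathcal{U})^{\alpha}$ for the appropriate exponent $\alpha$. Undoing the substitution $u=-v$ then gives the statement.

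The main obstacle is the singular character of the right-hand side as $u\to 0$. It is what prevents a direct appeal to the standard Dirichlet solvability theorem, and it forces one to regularize and establish a priori $C^2$ and gradient–blow-up estimates uniform in the regularization, to pin down the precise boundary asymptotics of the solution (which is also exactly what legitimises the ratio argument in the uniqueness proof), and to bootstrap interior smoothness from strict convexity, itself deduced from positivity of the right-hand side. All of this is carried out in the work of Cheng and Yau, to which I would refer for the technical details.
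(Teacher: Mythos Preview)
Your proposal is a reasonable outline of the Cheng--Yau argument, but you should be aware that the paper does not prove this theorem at all: it is stated as a known result and attributed to Loewner--Nirenberg (for smooth strictly convex planar domains, 1974) and to Cheng--Yau (general case, 1977), with citations to \cite{LN74,ChengYau1977,Loftin2011}. The theorem serves only as background for the definition of the affine metric and its comparison with the Binet--Legendre metric.

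So there is nothing to compare your argument against. Your sketch does track the classical approach (reduction to the convex normalization, uniqueness by a ratio/maximum-principle argument, existence via exhaustion and barriers, interior regularity by Pogorelov/Caffarelli estimates), and your identification of the boundary singularity of the right-hand side as the main technical obstacle is correct. If you intend to include a proof where the paper has none, it would be appropriate simply to refer to Cheng--Yau as the paper does; if you want to keep the sketch, you should label it explicitly as an outline of their proof rather than as an independent argument.
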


This theorem was first proved in 1974 by  C.  Loewner and L. Nirenberg    for the case of smooth, 2-dimensional 
strictly convex  domain \cite{LN74} and in 1977 by S.Y. Cheng and S.T. Yau for the general case \cite{ChengYau1977,LN74,Loftin2011}.

\begin{definition}
 The \emph{affine metric} on the convex domain $\mathcal{U}$ is the Riemannian metric defined as
 $$
   g_{\mathrm{Aff}} = -\frac{1}{u}\sum_{i,j}  \frac{\partial ^2 u}{\partial x_i \partial x_j} dx_i dx_j,
 $$
 where  $u : \mathcal{U} \to  \r^n$ is the above solution to (\ref{eq.MongeAmpere}).
\end{definition}

 Observe that by the  strict concavity of $u$, the metric $g_{\mathrm{Aff}}$ is  positive definite, hence Riemannian.
 The name ``affine metric'' has been proposed in relation to the Blaschke theory of affine hypersurfaces,
 see \cite{BH,Loftin2010,NS}. The affine metric enjoys the following properties:

\begin{theorem}
\begin{enumerate}[i.)]
  \item The affine metric $g_{\mathrm{Aff}}$ is complete and invariant
 under projective transformations leaving the domain $\mathcal{U}$ invariant. 
  \item The affine metric $g_{\mathrm{Aff}} $ is bilipschitz equivalent to the Hilbert metric: there exists a constant $C$
  such that 
  $$
     \frac{1}{c} \Fhilb  \leq  \sqrt{\gaff} \leq  c\cdot  \Fhilb.
  $$
\end{enumerate}
\end{theorem}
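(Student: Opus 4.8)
The plan is to establish the projective invariance (i) first, by a uniqueness argument, and then to deduce the bilipschitz equivalence (ii) — and, with it, completeness — by combining (i) with a compactness argument for the space of convex domains. Here is the argument for (i). Let $T$ be a projective transformation of $\mathbb{RP}^n$ preserving $\mathcal{U}$ (seen in a fixed affine chart), and lift it to $\tilde T\in\mathrm{SL}(n+1,\r)$. The Monge--Amp\`ere equation (\ref{eq.MongeAmpere}) together with its boundary condition is projectively natural: if $u$ is the solution on $\mathcal{U}$, then $\tilde u:=\rho_T^{-1/(n+1)}\cdot(u\circ T)$, where $\rho_T>0$ is the affine factor describing how $\tilde T$ moves the chosen affine section, again satisfies (\ref{eq.MongeAmpere}) with zero boundary values on $T^{-1}(\mathcal{U})=\mathcal{U}$, and a direct computation shows that with these normalisations $T^{*}\gaff=\gaff$. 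Since the cited theorem of Cheng and Yau asserts uniqueness of the solution, $\tilde u=u$, so $\gaff$ is $T$-invariant. The same computation shows, more generally, that a projective map $T$ with $T(\mathcal{U}_1)=\mathcal{U}_2$ satisfies $T^{*}\gaff^{\mathcal{U}_2}=\gaff^{\mathcal{U}_1}$, an equivariance we record for use below. (Equivalently, one realises $\gaff$ as the Blaschke metric of the Cheng--Yau hyperbolic affine sphere asymptotic to the cone over $\mathcal{U}$; that sphere is canonically attached to the cone, hence $\mathrm{SL}(n+1,\r)$-equivariant, see \cite{ChengYau1977,Loftin2011}.)

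For (ii), note that the Hilbert metric is projectively natural in the very same sense, because $\dhilb$ is built from a cross-ratio. Consider the function
\[
\Phi(\mathcal{U},x):=\sup\left\{\, \frac{\sqrt{\gaff(\xi,\xi)}}{\Fhilb(x,\xi)},\ \frac{\Fhilb(x,\xi)}{\sqrt{\gaff(\xi,\xi)}}\ :\ \xi\in T_x\mathcal{U}\setminus\{0\}\,\right\},
\]
defined on the space of pairs (properly convex domain, marked interior point). By the equivariance above, $\Phi$ is $\mathrm{PGL}(n+1,\r)$-invariant, hence descends to the quotient; by Benz\'ecri's compactness theorem that quotient is compact. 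Moreover $\Phi$ is continuous there, using the elementary continuous dependence of the Hilbert metric on the domain together with the (less elementary) continuous dependence of the Cheng--Yau solution $u$, and hence of $\gaff$, on the domain \cite{ChengYau1977,Loftin2011}. A continuous function on a compact space is bounded, so $\Phi\le c^{2}$ for some $c\ge 1$; unravelling the definition gives $\tfrac1c\,\Fhilb\le\sqrt{\gaff}\le c\cdot\Fhilb$, which is the bilipschitz equivalence. Finally, the Hilbert metric on a bounded convex domain is complete — it is reversible, and completeness follows at once from the distance formula (\ref{eq.dhilb}), as already used in the previous Proposition — and completeness is a bilipschitz invariant, so $\gaff$ is complete as well.

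Two points carry the analytic weight, and the first of them is the genuine obstacle. It is the per-domain statement that $\Phi(\mathcal{U},x)<\infty$, i.e. that $\gaff$ and $\Fhilb$ are comparable for each individual domain; away from $\partial\mathcal{U}$ this is immediate by continuity and positive definiteness, but near $\partial\mathcal{U}$ it is precisely the content of the Monge--Amp\`ere a priori estimates of Cheng and Yau. One sandwiches $\mathcal{U}$ between balls at each boundary point, builds explicit sub- and supersolutions of (\ref{eq.MongeAmpere}) out of the Funk potential (or the distance to the boundary) to pin down $u$, differentiates twice with interior estimates to control its Hessian up to bounded factors, and matches the result against the explicit blow-up of $\Fhilb$, which — as in the Klein model of hyperbolic space — is of order $\delta^{-1}$ in the direction normal to $\partial\mathcal{U}$ and $\delta^{-1/2}$ tangentially, with $\delta=\mathrm{dist}(x,\partial\mathcal{U})$. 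The second point is lighter bookkeeping: fixing the topology on marked properly convex domains and checking that $u$, hence $\gaff$, varies continuously, which can be extracted from the uniqueness proof by a normal-families and maximum-principle argument. If one prefers to avoid Benz\'ecri altogether, the boundary estimates just described are already uniform with respect to the enclosing and enclosed balls, so they can be upgraded directly to a uniform-in-the-domain comparison; either way, the Monge--Amp\`ere estimates near the boundary are the heart of the matter.
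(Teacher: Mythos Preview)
The paper does not actually prove this theorem: it merely records that (i) is in \cite[\S 6 and \S 9]{LN74} (see also \cite{ChengYau1980,ChengYau1986}) and that (ii) is \cite[Proposition 3.4]{BH}. Your proposal therefore goes well beyond what the paper does, and your outline is essentially the argument one finds in those references: projective invariance via uniqueness of the Cheng--Yau solution (or, equivalently, via the canonical affine sphere), and bilipschitz equivalence via projective equivariance together with Benz\'ecri compactness of the space of marked properly convex domains. That is precisely the strategy of Benoist--Hulin.

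One comment on emphasis. In your final paragraph you single out the per-domain finiteness of $\Phi$ near $\partial\mathcal{U}$ as the ``genuine obstacle'' and describe the Cheng--Yau boundary estimates needed for it. But this somewhat undersells the point of the Benz\'ecri argument: once both metrics are known to depend continuously on the \emph{pointed} domain (the marked point being interior), the ratio $\Phi$ is a continuous, everywhere-finite function on a compact space, and no separate boundary analysis is required --- the compactness absorbs it. The analytic weight therefore lies squarely on the continuous dependence of the Cheng--Yau solution on the domain (your ``second point''), not on matching blow-up rates at the boundary. Your alternative route, bypassing Benz\'ecri and proving uniform boundary estimates directly, is of course also valid, but it is the harder of the two and not what \cite{BH} do.
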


A proof of the first statement is given in \cite[sec. 6 and 9]{LN74}, see also \cite{ChengYau1980, ChengYau1986}. The second statement is a recent result
by Y. Benoist and D. Hulin \cite[Proposition 3.4]{BH}.
Observe that the completeness of  $g_{\mathrm{Aff}} $ also follows from the second statement, since
the Hilbert metric is complete.   We then have the following 
\begin{corollary}
 The Binet-Legendre metric $\gbl$ associated to the Hilbert metric  in a properly convex domain $\mathcal{U} \subset  \mathbb{RP}^n$ is 
  bilipschitz equivalent to the affine metric $\gaff$.
\end{corollary}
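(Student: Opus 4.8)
The plan is to chain together two bilipschitz equivalences that have both already been established in the excerpt. First, by the previous Proposition, the Binet-Legendre metric $\gbl$ associated to the Hilbert metric $\Fhilb$ in the convex domain $\mathcal{U}$ is bilipschitz equivalent to $\Fhilb$ itself; that is, there is a constant $k_1$ with $\tfrac{1}{k_1}\Fhilb \leq \sqrt{\gbl} \leq k_1\Fhilb$. Second, by the Benoist--Hulin theorem quoted just above (statement (ii) of the last Theorem), the affine metric $\gaff$ is bilipschitz equivalent to $\Fhilb$, so there is a constant $c$ with $\tfrac{1}{c}\Fhilb \leq \sqrt{\gaff} \leq c\,\Fhilb$. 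Combining these two chains of inequalities — dividing through, and composing — yields $\tfrac{1}{k_1 c}\sqrt{\gaff} \leq \sqrt{\gbl} \leq k_1 c\,\sqrt{\gaff}$, which is exactly the assertion that $\gbl$ and $\gaff$ are bilipschitz equivalent.

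The only subtlety I would address carefully is the fact that the corollary is phrased for a properly convex domain $\mathcal{U}\subset\mathbb{RP}^n$ rather than for a bounded convex domain in an affine chart $\r^n$. I would note that a properly convex domain in $\mathbb{RP}^n$ is, by definition, contained in some affine chart in which it appears as a bounded convex domain, so both the Hilbert metric and its Binet-Legendre metric can be computed in that chart, and both the Proposition and the Benoist--Hulin result apply verbatim. I would also remark that the statement is intrinsic: the Hilbert metric, the affine metric, and (by Theorem \ref{th.BLproperties1}(b)) the Binet-Legendre metric are all invariant under projective transformations preserving $\mathcal{U}$, so the choice of affine chart is immaterial and the bilipschitz constant may be taken uniform.

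There is essentially no main obstacle here — the corollary is a formal consequence of two results already available, and the proof is a one-line transitivity argument together with the observation that bilipschitz equivalence of metrics is an equivalence relation. If anything, the only point worth a sentence is to confirm that ``bilipschitz equivalent'' is being used consistently (comparison of the norm $\sqrt{\gbl}$ with $\sqrt{\gaff}$ pointwise on $TM$, which then passes to the induced distances), so that the transitivity step is literally valid rather than merely morally so.
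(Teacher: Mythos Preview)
Your proposal is correct and matches the paper's own proof essentially verbatim: the paper writes that ``the corollary follows at once from the previous theorem and the main Theorem \ref{main:th}'', which is precisely the transitivity argument you describe (you cite the preceding Proposition rather than Theorem \ref{main:th} directly, but that Proposition is itself just an application of Theorem \ref{main:th} to the reversible Hilbert metric). Your added remark about passing from a properly convex domain in $\mathbb{RP}^n$ to a bounded convex domain in an affine chart is a reasonable clarification that the paper leaves implicit.
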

\begin{proof}
 The corollary follows at once from the previous theorem and the main Theorem \ref{main:th}.
\end{proof}

 \medskip
 
In conclusion, both the Binet-Legendre and the affine metric in a convex domain are complete, invariant under projective 
transformation and bilipschitz  equivalent  to the Hilbert metric. Observe however that the construction of the affine
metric is based on hard analysis to solve  a non--linear elliptic partial differential equation, so even  the existence 
of such a metric is a nontrivial fact.
On the other hand the Binet-Legendre metric is based on a direct and  quite elementary geometric construction.
This metric can be effectively computed, at least for  sufficiently simple  domains see e.g. \cite{Metzner}.
     
 \appendix

\section{Non smoothness of the John metric} 
  
The John metric, like the Binet-Legendre metric, is a natural construction in Finsler geometry 
that enjoys good functorial properties, in particular properties (b), (c)  and (d) of Theorem \ref{th.BLproperties1}
also hold for the John metric. However, the John metric is in general not smooth and this fact creates serious
limits to its potential usefulness in Finsler geometry. We illustrate this phenomenon by the following example: 
Consider the following Finsler metric $F$ on $M=\r^n$:
$$
  F(x,\xi) = \| \xi \|_{p(x)} = \left(\sum_{i=1}^n |\xi_i|^{p(x)}\right)^{1/p(x)},
$$
where $p$ is the function $p(x) = 1+e^{x_1}$. If one identifies $T_xM$ with $\r^n$, the Finsler unit ball is 
$$
 \Omega_x = \left\{\xi \in \mathbb{R}^n \tq \sum_{i=1}^n |\xi_i|^{p(x)} < 1\right\}, 
$$
It is easy to see that the  John ellipsoid of $\Omega_x$ is an  euclidean ball centered at the origin. 
Indeed, each $\Omega_x$ is invariant with respect to the symmetries $\sigma_i : (\dots,\xi_i,\dots) \mapsto (\dots,-\xi_i,\dots)$   \label{page.sigmai}  
and  $\sigma_{ij} : (\dots,\xi_i,\dots, \xi_j, \dots) \mapsto  (\dots,\xi_j\dots, \xi_i, \dots) $,  
and since  the John ellipsoid  $J[\Omega_x]$ of $\Omega_x$  is unique, it must be $\sigma_i$- and $\sigma_{ij}$-invariant 
for all $i,j = 1, \dots, n$. Since  the Euclidean balls centered at the origin are the only ellipsoid invariant with respect to all such symmetries, the John ellipsoid
must be such a ball.  The radius $r$ of  the ball $J[\Omega_x]$  only depends on $p = p(x)$ and a calculation shows that
\begin{equation*} \label{rp}
r(x)= \min\left\{1, n^{\tfrac{1}{2}- \tfrac{1}{p}}\right\} = 
\left\{ \begin{array}{ll}  
n^{\tfrac{1}{2}- \tfrac{1}{p}} &\text{if } 1< p\le 2, 
\\ 1 &\text{if }   p\ge 2.
\end{array} \right. 
\end{equation*}
Indeed, for $p\le 2$ a  common point of the boundary of  $J[\Omega_x]$ of $\Omega_x$  is  given by $ \xi  = (1,0,\cdots,0)$, while for  $p\ge 2$ a  common point
of the boundary of $J[\Omega_x]$ of $\Omega_x$  is   $\xi = \left(\left(\tfrac{1}{n}\right)^{\tfrac{1}{p}},\cdots,\left(\tfrac{1}{n}\right)^{\tfrac{1}{p}}\right)$.

\begin{pspicture*}(0,-2.)(15,1.7)
\psset{xunit=1.0cm,yunit=1.0cm,algebraic=true,dotstyle=o,dotsize=3pt 0,linewidth=0.8pt,arrowsize=3pt 2,arrowinset=0.25}
\psplot[plotpoints=200]{4.00000002928613}{5.999998253827496}{(1-abs(x-5)^(1.5))^(1/1.5)}
\psplot[plotpoints=200]{4.00000002928613}{5.999998253827496}{-(1-abs(x-5)^(1.5))^(1/1.5)}
\psplot[plotpoints=200]{6.500000024554854}{8.499997995302449}{(1-abs(x-7.5)^(2.99))^(1/2.99)}
\psplot[plotpoints=200]{6.500000024554854}{8.499997995302449}{-(1-abs(x-7.5)^(2.99))^(1/2.99)}
\psplot[plotpoints=200]{9}{11}{(1-abs(x-10)^(5.99))^(1/5.99)}
\psplot[plotpoints=200]{9}{11}{-(1-abs(x-10)^(5.99))^(1/5.99)}
\psplot[plotpoints=200]{1.500000034017478}{3.5}{-(1-abs(x-2.5)^(1.2))^(1/1.2)}
\psplot[plotpoints=200]{1.500000034017478}{3.5}{(1-abs(x-2.5)^(1.2))^(1/1.2)}
\pscircle[linestyle=dashed,dash=1pt 1pt](5,0){0.89}
\pscircle[linestyle=dashed,dash=1pt 1pt](10,0){1}
\pscircle[linestyle=dashed,dash=1pt 1pt](7.5,0){1}
\pscircle[linestyle=dashed,dash=1pt 1pt](2.5,0){0.79}
\rput[bl](4.6,-1.55){{$p=1.5$}}
\rput[bl](2.15,-1.55){{$p=1.2$}}
\rput[bl](9.7,-1.55){{$p=6$}}
\rput[bl](7.3,-1.55){{$p=3$}}
\end{pspicture*}

\centerline{{\begin{minipage}{8cm} \small
  Figure 3. The convex bodies $\Omega_x$ and their John  ellipsoids. 
\end{minipage}}}
 
\medskip

It is elementary to check that the function $r(x)$ is  not differentiable when $x_1 = \log (2)$, that is 
$p=2$. Therefore the ellipsoid $J[\Omega_x]$ does not depend smoothly on $x$.
Moreover, the metric  $\gjohn$ has a discontinuous curvature and therefore cannot be made smooth by a
$C^0$-change  of  coordinates. 
We have thus constructed an analytical Finsler metric $F$ on $\r^n$ such that the associated John metric is given 
at the point $x$ by
$$
  \gjohn (\xi , \eta) = \frac{1}{r(x)^2}\langle \xi , \eta \rangle,
$$
where $r(x)$ is not differentiable.

\medskip


\end{document}